\newtheorem{theorem}{Theorem}[section]
\newtheorem{corollary}{Corollary}[section]
\newtheorem{lemma}{Lemma}[section]
\newcommand{\R}{\mathbb{R}}
\title{Extending tensors on polar manifolds}
\author{Ricardo A. E. Mendes}
\begin{document}
\maketitle

\section{Introduction}

Let $(M,g)$ be a Riemannian manifold and $G$ a Lie group acting on $M$ properly by isometries. 
Recall that, by definition (see \cite{PalaisTerng87}, \cite{GroveZiller12}), this action is called \emph{polar} if there exists an immersed sub-manifold
 $\Sigma\to M$ meeting all $G$-orbits orthogonally. Such a
 submanifold $\Sigma$ is called a \emph{section}, and comes with a
 natural action by a discrete group of isometries $W=W(\Sigma)$, called its
 \emph{generalized Weyl group}. Sections are always totally geodesic, and the immersion $\Sigma \to M$ induces an isometry $\Sigma /W \to M/G$, so in particular $M/G$ is a Riemannian orbifold.

Denote by $C^\infty(T^{k,l}M)^G$, respectively $C^\infty(T^{k,l}\Sigma)^{W(\Sigma)}$, the sets of smooth $(k,l)$-tensors on $M$, respectively $\Sigma$, which are invariant under $G$, respectively $W$. Our main result states that the natural restriction map $C^\infty(T^{k,l}M)^G\to C^\infty(T^{k,l}\Sigma)^{W(\Sigma)}$ is surjective:

\begin{theorem}
\label{mainthm}
 Let $M$ be a polar $G$-manifold with immersed section $i:\Sigma\to M$, and $W(\Sigma)$ the generalized Weyl group associated to $\Sigma$.
Define the pull-back (restriction) map 
$$ \psi=i^* :C^\infty(T^{k,l}M)^G\to C^\infty(T^{k,l}\Sigma)^{W(\Sigma)}$$
by 
$$[\psi(\beta)](x)(v_1,\ldots v_l)=P^{\otimes k}[\beta(i(x)((di)_x v_1, \ldots (di)_x v_l)] $$
where $P:T_{i(x)}M\to T_x \Sigma$ is orthogonal projection.
Then $\psi$ is surjective.
\end{theorem}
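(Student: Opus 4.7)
The plan is to use the slice theorem to localize the problem around each orbit, reduce it to an extension question for polar orthogonal representations, and then attack that representation-theoretic version via equivariant invariant theory. To motivate the structure, note that on the regular stratum $M^{\mathrm{reg}}=G\cdot\Sigma^{\mathrm{reg}}$ there is an obvious extension: at $p=g\cdot x$ with $x\in\Sigma^{\mathrm{reg}}$, define $\beta(p)$ by transporting $\alpha(x)$ along $dg$ and declaring it to vanish whenever any input lies in the orbit direction $T_p(Gp)$. Independence of the choice of $(g,x)$ reduces to the $W$-invariance of $\alpha$, together with the standard fact that the principal isotropy acts trivially on $T_x\Sigma$ since $\Sigma$ is a section. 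This $\beta$ is visibly smooth and $G$-invariant on $M^{\mathrm{reg}}$, and satisfies $\psi(\beta)=\alpha$ on $\Sigma^{\mathrm{reg}}$ by inspection. The entire difficulty is to show that this tentative definition extends smoothly across singular orbits.

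To handle singularities I would argue locally. Near a point $p\in\Sigma$, the slice theorem identifies a $G$-invariant tubular neighborhood of $Gp$ with $G\times_{G_p}V_p$, where $V_p=\nu_p(Gp)$ carries the slice representation of $G_p$. For polar actions this slice representation is itself polar, with $T_p\Sigma\subset V_p$ as section and local generalized Weyl group the stabilizer $W_p\leq W$. Under the tube identification, $G$-invariant $(k,l)$-tensors on the tube correspond to $G_p$-equivariant tensor-valued maps on $V_p$, and $W$-invariant tensors on $\Sigma$ near $p$ correspond to $W_p$-equivariant tensor-valued maps on $T_p\Sigma$. Since the action is proper, a $G$-invariant partition of unity on $M$ exists, and it suffices to construct the extension inside each such tube. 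The problem thereby reduces to the linear case: an orthogonal polar representation of a compact Lie group $H$ on a Euclidean vector space $V$ with linear section $\Sigma$ and generalized Weyl group $W$.

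In this linear setting $(k,l)$-tensors become smooth maps $V\to U$ with $U=V^{\otimes k}\otimes V^{*\otimes l}$, and $G$-invariance becomes $H$-equivariance. I would invoke the equivariant version of Schwarz's theorem to write any smooth $W$-equivariant $\alpha\colon\Sigma\to U_0$ (where $U_0=\Sigma^{\otimes k}\otimes\Sigma^{*\otimes l}$) as a finite sum $\sum_i f_i\,\sigma_i$, with $f_i\in C^\infty(\Sigma)^W$ and $\sigma_i$ polynomial $W$-equivariants. The Palais--Terng restriction theorem (the $k=l=0$ case, already known for polar actions) extends each $f_i$ to an $H$-invariant smooth function on $V$, so it remains to extend each polynomial $\sigma_i$ to an $H$-equivariant polynomial map $V\to U$ whose restriction-projection recovers $\sigma_i$. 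This final polynomial step is the main obstacle: it amounts to a Chevalley-type surjectivity statement $\mathrm{Pol}(V,U)^H\twoheadrightarrow\mathrm{Pol}(\Sigma,U_0)^W$ for vector-valued equivariants, strengthening the classical scalar isomorphism $\mathbb{R}[V]^H\cong\mathbb{R}[\Sigma]^W$. One natural route is to use Dadok's theorem to realize the polar representation as the isotropy representation of a symmetric space $G'/H$; the Cartan decomposition then furnishes enough explicit $H$-equivariant tensor-valued polynomials (built from brackets and the restriction-to-$\Sigma$ map) to generate the target module. Once the polynomial extension is established, assembling the local solutions with the $G$-invariant partition of unity yields the required global $G$-invariant extension, and $\psi(\beta)=\alpha$ follows from the linearity of $\psi$ and the fact that each local piece projects correctly.
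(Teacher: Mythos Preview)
Your overall architecture matches the paper's exactly: reduce to $(0,l)$-tensors, localize via the slice theorem and a $G$-invariant partition of unity, pass from the tube $G\times_{G_p}V_p$ to the slice $V_p$ (the paper's Lemma~3.2), and then from the polar slice representation to its section $\Sigma$ (the paper's Lemma~3.1). You also correctly isolate the crux: the polynomial surjectivity $\mathrm{Pol}(V,U)^H\twoheadrightarrow\mathrm{Pol}(\Sigma,U_0)^W$, and you correctly note that Schwarz/Field-type arguments reduce the smooth statement to this polynomial one.

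The gap is precisely at that crux. Your proposed resolution---use Dadok's theorem to pass to an $s$-representation and then observe that ``the Cartan decomposition furnishes enough explicit $H$-equivariant tensor-valued polynomials (built from brackets and the restriction-to-$\Sigma$ map)''---is not a proof, and in fact the obvious explicit constructions do \emph{not} suffice. The polynomial statement you need is equivalent to surjectivity of the multi-variable restriction $\R[V^m]^H\to\R[\Sigma^m]^W$, and the paper explicitly records (Section~\ref{polarizations}) that the natural ``explicit'' subalgebra of polarizations $\mathcal{P}^m\subset\R[\Sigma^m]^W$ is strictly smaller than $\R[\Sigma^m]^W$ already for $W$ of type $D_n$ ($n>3$) and $F_4$. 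So a construction-by-hand approach of the kind you sketch would need a genuinely new idea to succeed.

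The paper closes this gap by invoking Tevelev's multi-variable Chevalley Restriction Theorem (Theorem~\ref{Tevelev}), whose proof uses the Kumar--Mathieu theorem (the PRV conjecture), and then passing from $s$-representations to arbitrary polar representations via Dadok and an averaging argument (Corollary~\ref{polarMVCRT}). That is the missing ingredient in your proposal; once you have it, the rest of your outline goes through essentially as the paper's does.
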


In the case of functions, that is, $(k,l)=(0,0)$, the map $\psi$ above is an isomorphism. This is known as the Chevalley Restriction Theorem --- see \cite{PalaisTerng87}.

Note that Theorem \ref{mainthm} applies to $(0,l)$-tensors with symmetry properties, such as symmetric $l$-tensors, exterior $l$-forms, etc. This can be phrased naturally in terms of Weyl's construction (see \cite{FultonHarris} Lecture 6). Recall that Weyl's construction associates to each partition $\lambda=(\lambda_1, \ldots \lambda_k)$ of $l\in\mathbb{N}$ a functor $\mathbb{S}_\lambda$ of vector spaces called its Schur functor. One recovers $\Lambda^l$ and Sym$^l$ as the Schur functors associated to $\lambda=(l)$ and $\lambda=(1,1, \ldots 1)$, respectively. 

\begin{corollary}
\label{symmetrizer}
Let $M$ be a Riemannian manifold with an isometric polar action by $G$.
Let $\lambda=(\lambda_1, \ldots , \lambda_k)$ be a partition of $l\in\mathbb{N}$, and consider the associated Schur functor $\mathbb{S}_\lambda$. Then the (surjective) restriction map $\psi: C^\infty(T^{0,l}M)^G\to C^\infty(T^{0,l}\Sigma)^W$ induces a surjective map 
$$ \psi_\lambda:C^\infty(\mathbb{S}_\lambda (T^*M))^G \to C^\infty(\mathbb{S}_\lambda (T^*\Sigma))^W $$
\end{corollary}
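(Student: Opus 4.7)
The plan is to realize the Schur functor $\mathbb{S}_\lambda$ as a natural direct summand of the $l$-th tensor power, and then deduce the corollary from Theorem \ref{mainthm} by functoriality. Recall from \cite{FultonHarris} that $\mathbb{S}_\lambda(V)$ can be presented as the image of the Young symmetrizer $c_\lambda \in \mathbb{Q}[S_l] \subset \mathrm{End}(V^{\otimes l})$, and that $c_\lambda^2 = n_\lambda c_\lambda$ for some nonzero rational $n_\lambda$, so that $e_\lambda := c_\lambda/n_\lambda$ is an idempotent with image $\mathbb{S}_\lambda(V)$, natural in $V$.

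Applied fiberwise, $e_\lambda$ yields smooth bundle projections on $(T^*M)^{\otimes l}$ and $(T^*\Sigma)^{\otimes l}$ onto the subbundles $\mathbb{S}_\lambda(T^*M)$ and $\mathbb{S}_\lambda(T^*\Sigma)$, respectively. Since $e_\lambda$ acts purely by permuting tensor factors, it commutes with the diagonal $G$- and $W$-actions and hence preserves equivariant sections. This produces identifications $C^\infty(\mathbb{S}_\lambda(T^*M))^G = e_\lambda \, C^\infty(T^{0,l}M)^G$ and analogously for $\Sigma$, reducing the problem to an assertion about the unrestricted tensor spaces.

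With this in place, the argument is short. Given $\alpha \in C^\infty(\mathbb{S}_\lambda(T^*\Sigma))^W$, apply Theorem \ref{mainthm} to obtain $\beta \in C^\infty(T^{0,l}M)^G$ with $\psi(\beta) = \alpha$, and set $\tilde\beta := e_\lambda \beta \in C^\infty(\mathbb{S}_\lambda(T^*M))^G$. The key step to verify, which I expect to be the only nontrivial point, is that $\psi$ commutes with $e_\lambda$: by definition $\psi$ acts fiberwise as $P^{\otimes l}\circ i^*$, and $P^{\otimes l}$ visibly commutes with the $S_l$-action permuting tensor factors, hence with $c_\lambda$ and $e_\lambda$. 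It then follows that $\psi_\lambda(\tilde\beta) = e_\lambda \psi(\beta) = e_\lambda \alpha = \alpha$, since $\alpha$ already lies in the image of $e_\lambda$. Beyond this commutativity check, the argument is purely formal, and I do not foresee any substantive obstacle.
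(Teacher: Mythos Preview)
Your proposal is correct and follows essentially the same approach as the paper's own proof: both realize $\mathbb{S}_\lambda$ as the image of the Young symmetrizer $c_\lambda$ acting on $l$-fold tensors, observe that $\psi$ is $S_l$-equivariant (because it is a tensor power of a single linear map on each fiber), and conclude that the surjective $\psi$ carries the image of $c_\lambda$ onto the image of $c_\lambda$. Your version is slightly more explicit in normalizing to the idempotent $e_\lambda$ and in writing out the commutativity check, but the argument is the same.
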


For context, consider two special cases of Corollary \ref{symmetrizer}: exterior $l$-forms and symmetric $2$-tensors. In the case of exterior forms the conclusion of Corollary \ref{symmetrizer} is implied by P. Michor's Basic Forms Theorem --- see \cite{Michor96} and \cite{Michor97}. In fact, Michor's Theorem gives more precise information: it states that for a polar $G$-manifold $M$ with section $\Sigma$, every smooth $W(\Sigma)$-invariant $l$-form on $\Sigma$ can be extended \emph{uniquely} to a smooth $G$-invariant $l$-form on $M$ which is \emph{basic}, that is, vanishes when contracted to vectors tangent to the $G$-orbits.

The case of symmetric $2$-tensors follows from \cite{Mendes11}, which is again a sharper statement in the sense that a set of basic tensors is identified. This is used in the following extension result for Riemannian metrics: 
\begin{theorem}
\label{metric}
Let $G$ act polarly on the Riemannian manifold $M$ with section $\Sigma$ and generalized Weyl group $W$. Consider the restriction map (which is surjective by Corollary \ref{symmetrizer}):
$$ \psi=|_\Sigma: C^\infty(Sym^2M)^G \to C^\infty(Sym^2\Sigma)^W$$
For any Riemmanian metric $\sigma\in C^\infty(Sym^2\Sigma)^W $, there is a Riemannian metric $\tilde{\sigma}\in C^\infty(Sym^2M)^G$ such that $\psi(\tilde{\sigma})=\sigma$, and with respect to which the $G$-action is polar with the same section $\Sigma$.
\end{theorem}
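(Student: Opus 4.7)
The strategy has two main phases: first construct a candidate for $\tilde\sigma$ on the principal stratum $M^\circ$ using any extension of $\sigma$ provided by Corollary~\ref{symmetrizer}, together with the polar structure of the original metric $g$; then show that this candidate extends smoothly across singular orbits.

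For the first phase, apply Corollary~\ref{symmetrizer} with $\lambda=(1,1)$ to obtain a smooth $G$-invariant symmetric $2$-tensor $\beta$ on $M$ with $\psi(\beta)=\sigma$. On the open dense principal stratum $M^\circ$, polarity of $g$ gives a smooth, $G$-invariant, $g$-orthogonal splitting $TM^\circ=H\oplus V$, where $V$ is tangent to the $G$-orbits and $H$ is tangent to the sections $a\cdot\Sigma$, $a\in G$. Letting $P_H,P_V$ denote the corresponding $g$-orthogonal projections, I would set
\[
\tilde\sigma_q(X,Y):=\beta_q(P_HX,P_HY)+g_q(P_VX,P_VY),\qquad q\in M^\circ.
\]
On $M^\circ$ this is smooth and $G$-invariant; it restricts to $\sigma$ on $\Sigma\cap M^\circ$ since $P_H$ is the identity and $P_V=0$ on $T\Sigma$ there; and $\Sigma$ is orthogonal to the orbits by construction. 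Positive definiteness on $M^\circ$ can be arranged by first replacing $\beta$ with $\beta+Cg$ for large $C>0$ and then rescaling the horizontal part so that the restriction to $\Sigma$ is still $\sigma$; one uses that $g|_\Sigma$ contributes only to the $H$-summand to keep the correction under control.

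The second phase is the crux of the argument. The projections $P_H$ and $P_V$ fail to be smooth across singular orbits, so one must prove directly that $\tilde\sigma$ extends to a smooth tensor on all of $M$. Writing $\tilde\sigma=\beta+\eta$ on $M^\circ$, the task becomes showing that $\eta$ extends smoothly. I would proceed locally via the slice theorem: near a point $p$ on a singular orbit a $G$-invariant neighborhood of $G\cdot p$ is equivariantly diffeomorphic to $G\times_{G_p}S_p$, where $S_p$ is a slice carrying a polar $G_p$-representation with section (a neighborhood of $p$ in) $\Sigma\cap S_p$ and Weyl group a subgroup of $W$. In this linear model the restriction of $\eta$ to $\Sigma\cap S_p$ vanishes on $T(\Sigma\cap S_p)$, so Theorem~\ref{mainthm} yields a smooth $G_p$-invariant extension; uniqueness on the regular part of $S_p$ forces this extension to coincide with the original $\eta$, giving the required smoothness.

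Finally, polarity of $\tilde\sigma$ with section $\Sigma$ follows from the construction. Orthogonality of $\Sigma$ to the orbits holds on $M^\circ$ by construction and extends to all of $\Sigma$ by continuity. For totally geodesicness, $\tilde\sigma$ and $g$ coincide on the vertical directions normal to $\Sigma$, so the second fundamental form of $\Sigma$ in $(M,\tilde\sigma)$ agrees with that in $(M,g)$, which vanishes by polarity of $g$. The main obstacle is the smooth extension in the second phase: a naive partition of unity destroys $G$-invariance or introduces horizontal-vertical cross terms, so the argument genuinely requires the surjectivity of $\psi$ (and, presumably, the finer basic-tensor information from \cite{Mendes11}) to compare the linear-model extension with the given $\eta$ on overlaps.
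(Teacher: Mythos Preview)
Your second phase contains a genuine gap. The tensor $\tilde\sigma=\beta(P_H\cdot,P_H\cdot)+g(P_V\cdot,P_V\cdot)$ that you define on $M^\circ$ does not, in general, extend even continuously across singular orbits, and the argument you give for the extension cannot work. The key step ``Theorem~\ref{mainthm} yields a smooth $G_p$-invariant extension; uniqueness on the regular part of $S_p$ forces this extension to coincide with the original $\eta$'' fails because the restriction map $\psi$ for $(0,2)$-tensors is surjective but \emph{not} injective: there is no uniqueness to invoke. Knowing that $\psi(\eta)=0$ only tells you that $0$ is one smooth preimage of $\psi(\eta)$; it says nothing about whether your particular $\eta$ is smooth. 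Concretely, take $K=SO(2)$ on $V=\R^2$ with the flat metric $g$, section $\Sigma$ the $x$-axis, $W=\mathbb{Z}/2$, $\sigma=2\,dx^2$, and the obvious lift $\beta=2g$. Your formula gives $\tilde\sigma=2\,dr^2+r^2\,d\theta^2$, whose $dx^2$-coefficient in Cartesian coordinates is $(2x^2+y^2)/(x^2+y^2)$: this is $2$ along the $x$-axis and $1$ along the $y$-axis, so $\tilde\sigma$ has no continuous extension to the origin. Equivalently $\eta=\tilde\sigma-\beta=-r^2\,d\theta^2$ satisfies $\psi(\eta)=0$ but is discontinuous at $0$.

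The paper proceeds in the opposite order and uses more than bare surjectivity. It first localizes to a tube $G\times_KV$ via the Slice Theorem and $G$-invariant partitions of unity. On the slice $V$ it does \emph{not} project an arbitrary extension onto the horizontal distribution; instead it invokes the Hessian Theorem (Theorem~\ref{hessian}) to write $\sigma=\sum_i a_i\,\mathrm{Hess}(b_i)$ with $a_i,b_i$ Weyl-invariant, lifts $a_i,b_i$ to $K$-invariant functions $\tilde a_i,\tilde b_i$ on $V$ by Chevalley restriction, and sets $\beta_1=\sum_i\tilde a_i\,\mathrm{Hess}(\tilde b_i)$. This $\beta_1$ is smooth on all of $V$ by construction and automatically annihilates horizontal-vertical pairs (the unlabeled lemma just before Lemma~\ref{positive}). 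That is exactly the ``finer basic-tensor information from \cite{Mendes11}'' you mention parenthetically, and it is essential rather than optional: it replaces your singular projection $\beta(P_H\cdot,P_H\cdot)$ by something globally smooth with the same restriction and the same orthogonality to orbits. Positivity near $0$ then follows from a Schur-lemma argument (Lemma~\ref{positive}), and the metric in the orbit direction is supplied independently via Lemma~\ref{tube}.
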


For both Theorem \ref{metric} and Michor's Basic Forms Theorem, the proof relies on polarization results in the Invariant Theory of finite reflection groups --- see section \ref{polarizations}. On the other hand, the main ingredient in the proof of Theorem \ref{mainthm} is a multi-variable version of the Chevalley Restriction Theorem due to Tevelev --- see section \ref{MVCRT}.

An application of Theorem \ref{metric} is to give a partial answer to a natural question by K. Grove: Given a proper isometric action of $G$ on a Riemannian manifold $(M,g)$, describe the set of all metrics on $M/G$ which are induced by smooth $G$-invariant metrics $g_0$ on $M$. Theorem \ref{metric} answers this question under the additional hypothesis that $M$ is a polar $G$-manifold. Namely, that set of metrics on $M/G=\Sigma/W$ coincides with the set of smooth orbifold metrics.

Another application of Theorem \ref{metric} is an important step in the main reconstruction result in \cite{GroveZiller12}. This was in fact our main motivation for Theorem \ref{metric}.

The present paper is organized as follows.

In section \ref{MVCRT} we state Tevelev's multi-variable version of the Chevalley Restriction Theorem for isotropy representations of symmetric spaces (Theorem \ref{Tevelev}), and generalize it to the class of polar representations (Corollary \ref{polarMVCRT}).

Section \ref{sectionpolar} is concerned with the proofs of Theorem \ref{mainthm} and Corollary \ref{symmetrizer}.

In section \ref{polarizations} we show how the algebraic results behind Michor's Basic Forms Theorem \cite{Michor96, Michor97} and Theorem \ref{metric} (namely Solomon's Theorem \cite{Solomon63} and Theorem \ref{hessian}) are in fact results about polarizations in the Invariant Theory of finite reflection groups. We then show in detail how Theorem \ref{metric} follows from Theorem \ref{hessian}.

The Appendix provides a proof of Theorem \ref{hessian}. It is computer-assisted, and mostly reproduced from the author's PhD dissertation \cite{Mendes11}.

Acknowledgements: Part of this work was completed during my PhD, and I would like to thank my advisor W. Ziller for the long-term support. I would also like to thank A. Lytchak and J. Tevelev for useful communication.

\section{Multi-variable Chevalley Restriction Theorem}
\label{MVCRT}

Let $(G,K)$ be a symmetric pair, and consider the isotropy representation of $K$ on $V=T_K G/K$, also called an s-representation. This is polar, and any maximal abelian sub-algebra $\Sigma\subset V$ is a section.  Its generalized Weyl group $W$ is also called the ``baby Weyl group''. The classic Chevalley Restriction Theorem says that 
$$ |_\Sigma : \R[V]^K\to \R[\Sigma]^W$$
is an isomorphism (see \cite{Warner}, page 143).

Now consider the diagonal action of $K$ on $V^m$ (respectively  $W$ on $\Sigma^m$), and the corresponding algebras of invariant ($m$-variable) polynomials $\R[V^m]^K$ (respectively $\R[\Sigma^m]^W$). In contrast with the single-variable case, the restriction map $|_\Sigma $
is not injective. On the other hand, surjectivity is due to Tevelev:
\begin{theorem}[\cite{Tevelev00}]
\label{Tevelev} In the notation above, the restriction map
$|_\Sigma:\R[V^m]^K\to \R[\Sigma^m]^W $
is surjective. 
\end{theorem}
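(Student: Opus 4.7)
The plan is to complexify and recast the statement geometrically. Let $V_\C$, $\Sigma_\C$, and $K_\C$ denote complexifications. Since $K$ is compact, averaging gives $\R[V^m]^K \otimes_\R \C \cong \C[V_\C^m]^{K_\C}$ and similarly on the section side, so it suffices to prove surjectivity of
$$ |_{\Sigma_\C^m}: \C[V_\C^m]^{K_\C}\to \C[\Sigma_\C^m]^W. $$
Equivalently, the induced morphism of affine GIT quotients $\Phi: \Sigma_\C^m/W \to V_\C^m/\!/K_\C$ should be a closed embedding.

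The first substantive step is to show that $\Phi$ is bijective on closed points. Since the target parametrizes closed $K_\C$-orbits, this amounts to two claims: (a) every closed $K_\C$-orbit in $V_\C^m$ meets $\Sigma_\C^m$; and (b) two tuples in $\Sigma_\C^m$ that are $K_\C$-conjugate must already be $W$-conjugate. For (b), if the tuple spans $\Sigma_\C$ as a vector space, then a conjugating $k \in K_\C$ is forced to normalize $\Sigma_\C$ and therefore to act through $W$; degenerate tuples can be handled by a Zariski-density/limiting argument. For (a), one first applies Hilbert--Mumford to reduce to ``simultaneously semisimple'' representatives of the closed orbit, and then attempts to conjugate all $m$ components into $\Sigma_\C$.

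Once bijectivity on closed points is in hand, I would upgrade to an isomorphism of varieties (hence to surjectivity of the ring map). The categorical quotient $V_\C^m/\!/K_\C$ is normal, being the GIT quotient of a smooth affine variety by a reductive group. The map $\Phi$ is finite: the source is a finite quotient of the affine space $\Sigma_\C^m$, and both sides have equal dimension $m \cdot \dim_\C \Sigma_\C$, so fibers are finite and $\Phi$ is dominant. A finite birational morphism onto a normal variety is an isomorphism, which yields the desired surjectivity.

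The main obstacle will be claim (a): exhibiting, within any given closed $K_\C$-orbit in $V_\C^m$, a tuple all of whose components lie in the common Cartan subspace $\Sigma_\C$. The case $m=1$ is the classical Kostant--Rallis theorem. For $m \geq 2$ the components need not commute, so simultaneous diagonalization is unavailable. The natural approach is induction on $m$: conjugate $v_1$ into $\Sigma_\C$, then observe that further conjugations of $(v_2, \ldots, v_m)$ must be carried out by elements of the centralizer $Z_{K_\C}(v_1)$, which itself carries the structure of a complexified symmetric pair still having $\Sigma_\C$ as a Cartan subspace (at least for generic $v_1$). Making this inductive structure precise --- in particular handling non-generic $v_1$ and verifying that the induced pair satisfies the hypotheses of the induction --- is the technical heart of Tevelev's argument.
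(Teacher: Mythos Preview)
Your approach has a fundamental error that makes the strategy unworkable. You assert that ``both sides have equal dimension $m \cdot \dim_\C \Sigma_\C$'', and then aim to prove that $\Phi:\Sigma_\C^m/W\to V_\C^m/\!/K_\C$ is bijective, hence an isomorphism. But for $m\geq 2$ the GIT quotient $V_\C^m/\!/K_\C$ is strictly larger. Already in the adjoint case $V=\mathfrak{g}$ with $G$ simple, a generic pair $(x,y)\in\mathfrak{g}^2$ has stabilizer equal to the center, so $\dim(\mathfrak{g}_\C^2/\!/G_\C)=2\dim\mathfrak{g}-\dim G=\dim\mathfrak{g}$, whereas $2\dim\Sigma=2\,\mathrm{rk}(G)$. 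Your claim (a) --- that every closed $K_\C$-orbit in $V_\C^m$ meets $\Sigma_\C^m$ --- is therefore false: any tuple in $\Sigma_\C^m$ consists of pairwise commuting elements, while there certainly exist closed orbits of non-commuting semisimple pairs. So the inductive scheme you sketch for (a) cannot succeed, and there is nothing to ``make precise''. Surjectivity of the restriction map corresponds geometrically to $\Phi$ being a \emph{closed embedding} into a higher-dimensional target, not an isomorphism; showing it is a closed embedding is exactly the content of the theorem and is not accessible by the finite-birational-onto-normal argument.

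For comparison, the paper does not prove this theorem at all: it is quoted from Tevelev, and the paper only remarks that Tevelev's proof (like Joseph's earlier proof for the adjoint case) relies on the Kumar--Mathieu theorem, formerly the PRV conjecture. That is a deep statement in representation theory about which irreducible summands occur in tensor products of simple highest-weight modules, and the argument proceeds representation-theoretically rather than through GIT geometry. Your claim (b), incidentally, is essentially correct and is the easy half; but (b) alone, even combined with finiteness and normality of the source, does not yield that $\Phi$ is a closed immersion.
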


Remarks: The proof of Theorem \ref{Tevelev} relies on the Kumar-Mathieu Theorem, previously known as the PRV conjecture, see \cite{Kumar89} and \cite{Mathieu89}. Joseph \cite{Joseph97} proved the Theorem above in the special case of the adjoint action, using similar techniques. In \cite{Tevelev00} the Theorem above is stated only for $m=2$ factors. But on page 324 it is remarked that ``Actually, this (and Josephs's) Theorem also holds for any number of summands [...] ''.

We observe that Theorem \ref{Tevelev} generalizes to the class of \emph{polar} representations. (See \cite{Dadok85} for a treatment of polar representations)

\begin{corollary}
\label{polarMVCRT}
Let $K\subset O(V)$ be a polar representation, with section $\Sigma$ and generalized Weyl group $W\subset O(\Sigma)$. Then the $m$-variable restriction is surjective: 
$$|_\Sigma :\R[V^m]^K\to \R[\Sigma^m]^W $$
\end{corollary}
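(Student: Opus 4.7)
The plan is to reduce to the s-representation case covered by Tevelev's Theorem \ref{Tevelev} using Dadok's classification \cite{Dadok85}, which states that every polar representation of a compact Lie group is orbit-equivalent to an s-representation. I would first produce an s-representation $K' \subseteq O(V)$ sharing the same orbits on $V$ as $K$, arranged so that $K \subseteq K'$; a natural candidate is the closed subgroup of $O(V)$ consisting of elements that preserve each $K$-orbit setwise, which automatically contains $K$ and is orbit-equivalent to it, and which must then be verified to be an s-representation via the classification.

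Since $K$ and $K'$ have the same orbits on $V$, the submanifold $\Sigma$ is also a section for $K'$, and the two generalized Weyl groups coincide as subgroups of $O(\Sigma)$. Both the notion of a section and of $W$ are determined entirely by the orbit partition of $V$: the section condition is orbit-theoretic, and two points of $\Sigma$ lie in the same $W$-orbit if and only if they lie in the same group-orbit, which is by hypothesis the same for $K$ and $K'$.

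Applying Theorem \ref{Tevelev} to the s-representation $K'$ now gives surjectivity of $|_\Sigma : \R[V^m]^{K'} \to \R[\Sigma^m]^W$. The containment $K \subseteq K'$ yields the inclusion of invariants $\R[V^m]^{K'} \subseteq \R[V^m]^K$, so the restriction map $|_\Sigma : \R[V^m]^K \to \R[\Sigma^m]^W$ a fortiori surjects onto $\R[\Sigma^m]^W$, proving the corollary.

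The hard part is securing the containment $K \subseteq K'$ in the Dadok step. If this is not readily available in the literature, a backup strategy is Haar averaging over $K$: for $F \in \R[V^m]^{K'}$ with $F|_{\Sigma^m} = f$, set $\tilde F(v) = \int_K F(kv)\,dk$, which is manifestly $K$-invariant. One then needs to verify $\tilde F|_{\Sigma^m} = f$ by showing that for generic $\sigma \in \Sigma^m$ the diagonal $K$- and $K'$-orbits of $\sigma$ in $V^m$ coincide; this reduces to a dimension-plus-tangent-space comparison, using that orbit-equivalence on $V$ forces $\mathfrak{k}\cdot v = \mathfrak{k}'\cdot v$ at every $v \in V$ and that the two orbits in $V^m$ are connected and of the same dimension through the common point $\sigma$.
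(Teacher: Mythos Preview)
Your overall strategy—reduce to an s-representation via Dadok and use the inclusion of invariants—is exactly the paper's approach. The gap is in the connectedness bookkeeping. Dadok's theorem (and the Eschenburg--Heintze refinement) asserts that the maximal orbit-equivalent extension of a \emph{connected} polar group is an s-representation. For a possibly disconnected $K$, the maximal group $K'$ preserving every $K$-orbit need not be an s-representation, and nothing in the classification hands you this; your phrase ``must then be verified to be an s-representation via the classification'' hides a claim that is not in the literature and may well be false (the $K$-orbits are unions of $K_0$-orbits, and there is no reason an s-representation should realize exactly that coarser partition).

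The paper fixes this by passing first to the identity component $K_0$: Dadok applies, Tevelev gives surjectivity onto $\R[\Sigma^m]^{W_0}$, and the inclusion $\tilde K\supset K_0$ gives surjectivity of $|_\Sigma:\R[V^m]^{K_0}\to\R[\Sigma^m]^{W_0}$. The remaining step is a \emph{finite} average over $K/K_0$, together with the observation that every coset $hK_0$ has a representative in $N_K(\Sigma)$ (since $K_0$ acts transitively on sections). Thus $(h\tilde\beta_0)|_{\Sigma^m}$ is the $W$-translate $h\cdot\beta$, and averaging returns $\beta$ because $\beta$ is $W$-invariant. This is the missing ingredient in your argument.

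Your backup Haar-averaging idea is morally the same device, but the justification you give does not work: orbit-equivalence on $V$ implies $\mathfrak{k}\cdot v=\mathfrak{k}'\cdot v$ for each $v$ separately, but it does \emph{not} follow that the diagonal tangent spaces $\mathfrak{k}\cdot\sigma$ and $\mathfrak{k}'\cdot\sigma$ in $V^m$ coincide (knowing $\xi s_1=\xi' s_1$ says nothing about $\xi s_2$ versus $\xi' s_2$). The clean substitute is precisely the normalizer trick above: average over the finite quotient and use that coset representatives can be chosen to preserve $\Sigma$.
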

\begin{proof}
Let $K_0$ be the connected component of $K$ which contains the identity. It is polar with the same section $\Sigma$. Let $W_0$ be its generalized Weyl group, so that $W_0\subset W$. 
From the classification of irreducible polar representations in \cite{Dadok85}, it follows that the maximal subgroup $\tilde{K}\subset O(V)$, containing $K_0$, that is orbit-equivalent to $K_0$, defines an s-representation. (This fact has been given a classification-free proof in \cite{EschenburgHeintze99}.) Note that $K_0$ and $\tilde{K}$ have the same sections and generalized Weyl groups.

Theorem \ref{Tevelev} states that $$|_\Sigma:\R[V^m]^{\tilde{K}}\to\R[\Sigma^m]^{W_0}$$ is surjective. But since $\tilde{K}\supset K_0$, we have $\R[V^m]^{\tilde{K}}\subset \R[V^m]^{K_0}$, and so 
$$|_\Sigma:\R[V^m]^{K_0}\to\R[\Sigma^m]^{W_0}$$
is again surjective.

Finally, to show $|_\Sigma:\R[V^m]^{K}\to\R[\Sigma^m]^{W}$ is surjective, let $\beta\in\R[\Sigma^m]^{W}$. Then there is $\tilde{\beta_0}\in \R[V^m]^{K_0}$ which restricts to $\beta$. Define
$$ \tilde{\beta}=\frac{1}{|K/K_0|}\sum_{h\in K/K_0} h \tilde{\beta_0}$$
Since $\tilde{\beta}$ equals the average of $\tilde{\beta_0}$ over $K$, it is $K$-invariant. To show that $\tilde{\beta}|_\Sigma=\beta$, we note that each coset $hK_o\in K/K_0$ can be represented by some $h\in N(\Sigma)$. Indeed, for an arbitrary $h\in K$, $h\Sigma$ is a section for $K$, hence also for $K_0$. Since $K_0$ acts transitively on the sections, there is $h_0\in K_0$ such that $hh_0^{-1}\in N(\Sigma)$. Therefore
$$ \tilde{\beta}|_\Sigma= \frac{1}{|K/K_0|}\sum_{h\in K/K_0} (h \tilde{\beta_0})|_\Sigma=\frac{1}{|K/K_0|}\sum \beta= \beta $$
because $\beta$ is $W$-invariant.
\end{proof}

Note that the algebra of multi-variable polynomials $\R[V^m]$ is graded by $m$-tuples of natural numbers  $(d_1,\ldots,d_m)$, and similarly for  $\R[\Sigma^m]$. Consider the subspace generated by the polynomials of degree $(*,1,\ldots, 1)$. These can be identified with those tensor fields of type $(0,m-1)$ which have polynomial coefficients, that is, members of $\R[V, (V^*)^{m-1}]$, respectively $\R[\Sigma, (\Sigma^*)^{m-1}]$.

Since this grading is preserved by the restriction map $|_\Sigma$, Corollary \ref{polarMVCRT} implies:
\begin{corollary}
\label{maincor}
Let $K\subset O(V)$ be a polar representation, with section $\Sigma$ and generalized Weyl group $W\subset O(\Sigma)$. Then the restriction map for polynomial-coefficient invariant $(0,l-1)$-tensors
$$ |_\Sigma :\R[V, (V^*)^{l-1}]^K\to \R[\Sigma, (\Sigma^*)^{l-1}]^W$$
is surjective.
\end{corollary}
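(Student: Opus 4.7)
The plan is to deduce Corollary \ref{maincor} directly from Corollary \ref{polarMVCRT} by interpreting polynomial-coefficient tensor fields as a multi-graded piece of the multi-variable polynomial algebra, and exploiting the fact that both the diagonal group action and the restriction map respect this multi-grading. The indexed paragraph immediately preceding the statement already sets this identification up, so the proof is essentially to make the grading argument precise.

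First, I would spell out the identification $\R[V,(V^*)^{l-1}] \cong \R[V^l]_{(*,1,\ldots,1)}$, where the subscript denotes the subspace of polynomials on $V^l$ whose multi-degree $(d_0,d_1,\ldots,d_{l-1})$ satisfies $d_1 = \cdots = d_{l-1} = 1$. Such a polynomial $\alpha(x_0;x_1,\ldots,x_{l-1})$ is multilinear in the last $l-1$ slots, hence defines a map $V \to (V^*)^{\otimes(l-1)}$ polynomial in the base point, that is, a polynomial-coefficient $(0,l-1)$-tensor on $V$. Under this identification, the tensorial action of $K$ on the space of $(0,l-1)$-tensors corresponds exactly to the diagonal $K$-action on $\R[V^l]$, and likewise for $W$ on $\Sigma^l$. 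Moreover, the restriction map of Corollary \ref{maincor} agrees with the restriction map of Corollary \ref{polarMVCRT} when restricted to this component.

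Next, given $\beta \in \R[\Sigma,(\Sigma^*)^{l-1}]^W$, I view it via the identification as $\beta^\sharp \in \R[\Sigma^l]^W$, multi-homogeneous of degree $(*,1,\ldots,1)$. Corollary \ref{polarMVCRT} with $m=l$ produces some $\alpha \in \R[V^l]^K$ with $\alpha|_{\Sigma^l} = \beta^\sharp$, but a priori $\alpha$ need not sit in the desired multi-graded component. I would then decompose $\alpha = \sum_{(d_0,\ldots,d_{l-1})} \alpha_{(d_0,\ldots,d_{l-1})}$ into its multi-homogeneous summands.

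The final observation is that the diagonal $K$-action on $\R[V^l]$ preserves each multi-graded component, so every $\alpha_{(d_0,\ldots,d_{l-1})}$ is itself $K$-invariant; likewise the restriction map is multi-grading preserving, so $\alpha_{(d_0,\ldots,d_{l-1})}|_{\Sigma^l}$ is the $(d_0,\ldots,d_{l-1})$-component of $\beta^\sharp$, which vanishes unless $d_1 = \cdots = d_{l-1} = 1$. Hence $\tilde{\alpha} := \sum_{d_0 \geq 0} \alpha_{(d_0,1,\ldots,1)}$ is a $K$-invariant element of $\R[V^l]_{(*,1,\ldots,1)}$ — equivalently, an element of $\R[V,(V^*)^{l-1}]^K$ — which restricts to $\beta^\sharp$, i.e.\ to $\beta$. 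There is no real obstacle: the content of the corollary is entirely in the setup of the identification plus the compatibility of multi-grading with the $K$-action and with restriction.
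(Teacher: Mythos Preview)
Your proposal is correct and follows essentially the same approach as the paper: identify $\R[V,(V^*)^{l-1}]$ with the multi-degree $(*,1,\ldots,1)$ piece of $\R[V^l]$, and use that both the diagonal action and the restriction map preserve this multi-grading, so that the surjectivity of Corollary~\ref{polarMVCRT} descends to surjectivity on each graded component. The paper states this in one sentence in the paragraph preceding the corollary; your lift-then-project argument is simply the standard way of unpacking the implication ``grading-preserving surjection $\Rightarrow$ surjection on graded pieces.''
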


\section{Extending tensors}

\label{sectionpolar}

The goal of this section is to provide proofs of Theorem \ref{mainthm} and Corollary \ref{symmetrizer}.  We start with two Lemmas that will be used in proving Theorem \ref{mainthm}.

\begin{lemma}
\label{polarrep}
Let $V$ be a polar $K$-representation with section $\Sigma$ and generalized Weyl group $W$. Then restriction to $\Sigma$ is a surjective map
$$|_\Sigma :C^\infty(T^{0,l}V)^K \to C^\infty(T^{0,l}\Sigma)^W $$
\end{lemma}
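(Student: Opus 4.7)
The plan is to reduce the smooth statement to the polynomial case already handled by Corollary \ref{maincor}, by writing every smooth invariant tensor on $\Sigma$ as a $C^\infty(\Sigma)^W$-linear combination of polynomial invariant tensors, and then extending each factor separately to $V$. The key enabling result is Schwarz's theorem on modules of covariants: for a compact group $K$ acting orthogonally on $V$ and any finite-dimensional $K$-module $U$, the space $C^\infty(V,U)^K$ is finitely generated over $C^\infty(V)^K$ by elements of the polynomial module $\R[V,U]^K$. (Applying this to $U = (V^*)^{\otimes l}$ gives exactly invariant smooth $(0,l)$-tensor fields, and similarly on $\Sigma$ for the finite group $W$.)

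First I would fix, using Schwarz, polynomial generators $q_1,\ldots,q_N \in \R[\Sigma,(\Sigma^*)^{\otimes l}]^W$ of $C^\infty(\Sigma,(\Sigma^*)^{\otimes l})^W$ over $C^\infty(\Sigma)^W$, and write an arbitrary smooth $W$-invariant tensor $\sigma$ on $\Sigma$ as
\[
   \sigma = \sum_{i=1}^N f_i\, q_i,\qquad f_i \in C^\infty(\Sigma)^W.
\]
Next, I would extend each polynomial coefficient $q_i$: by Corollary \ref{maincor} applied with $l$ replaced by $l+1$, the map $\R[V,(V^*)^{\otimes l}]^K \to \R[\Sigma,(\Sigma^*)^{\otimes l}]^W$ is surjective, so each $q_i$ has a polynomial $K$-invariant extension $\tilde{q}_i$ on $V$.

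It remains to extend the smooth scalar coefficients $f_i$. For this I would combine the classical single-variable Chevalley Restriction Theorem with Schwarz's theorem on smooth invariant functions: choose polynomial generators $p_1,\ldots,p_r$ of $\R[V]^K$; by Chevalley their restrictions generate $\R[\Sigma]^W$; by Schwarz every $f_i \in C^\infty(\Sigma)^W$ has the form $f_i = F_i(p_1|_\Sigma,\ldots,p_r|_\Sigma)$ for some $F_i \in C^\infty(\R^r)$, so $\tilde{f}_i := F_i(p_1,\ldots,p_r) \in C^\infty(V)^K$ restricts to $f_i$. The tensor $\tilde{\sigma} := \sum_i \tilde{f}_i\, \tilde{q}_i$ is then a smooth $K$-invariant $(0,l)$-tensor on $V$ whose restriction to $\Sigma$ equals $\sigma$, since restriction is a ring/module homomorphism and commutes with both the scalar multiplication and the sum.

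The real content hidden in this argument is Schwarz's module-of-covariants theorem, which one needs in order to pass from polynomial invariant tensors to smooth ones; once that is granted, the proof is just a formal decomposition plus an invocation of Corollary \ref{maincor} and the classical Chevalley theorem. The main thing to be careful about is matching the notion of restriction: for a linearly embedded section $\Sigma \subset V$ the orthogonal projection $P$ appearing in Theorem \ref{mainthm} acts trivially on the tangent spaces of $\Sigma$ itself, so restriction of a $(0,l)$-tensor reduces to evaluating the multilinear form on vectors tangent to $\Sigma$, which is exactly the polynomial restriction used in Corollary \ref{maincor}.
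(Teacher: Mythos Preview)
Your proof is correct and follows essentially the same strategy as the paper's: reduce to the polynomial case (Corollary~\ref{maincor}), then upgrade to smooth tensors using finite generation of the smooth covariant module by polynomial ones, together with the Chevalley Restriction Theorem on scalars. The only cosmetic difference is the direction in which generators are chosen: the paper picks finitely many polynomial generators $\sigma_1,\ldots,\sigma_r$ of $\R[V,(V^*)^l]^K$ over $\R[V]^K$, observes that their restrictions generate $\R[\Sigma,(\Sigma^*)^l]^W$ (using Corollary~\ref{maincor} and $\R[V]^K\simeq\R[\Sigma]^W$), and then invokes Field's Lemma~3.1 (a Malgrange-division argument) to conclude that these same restrictions generate the smooth module $C^\infty(T^{0,l}\Sigma)^W$ over $C^\infty(\Sigma)^W$; you instead pick polynomial generators directly on the $\Sigma$ side via Schwarz's covariant theorem and then lift them. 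Field's Lemma and Schwarz's theorem play exactly the same role here, so the two arguments are interchangeable.
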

\begin{proof}
The space of polynomial-coefficient $(0,l)$-tensors $\R[V,(V^*)^l]^K\subset C^\infty(T^{0,l}V)^K$ is generated, as an $\R[V]^K$-module, by finitely many (homogeneous) $\sigma_1, \ldots \sigma_r$. (See \cite{Springer} Proposition 2.4.14)

Since $\R[V]^K=\R[\Sigma]^W$, Corollary \ref{maincor} implies that the restrictions $\sigma_1|_\Sigma, \ldots \sigma_r|_\Sigma$ generate  $\R[\Sigma,(\Sigma^*)^l]^W$ as an $\R[\Sigma]^W$-module.

Then, by an argument involving the Malgrange Division Theorem  and the fact that $\R[\Sigma,(\Sigma^*)^l]^W$ is dense in $C^\infty(T^{0,l}\Sigma)^W$ (see \cite{Field77} Lemma 3.1), we conclude that $\sigma_1|_\Sigma, \ldots \sigma_r|_\Sigma$ generate $C^\infty(\Sigma, (\Sigma^*)^l)^W=C^\infty(T^{0,l}\Sigma)^W$ as a $C^\infty(\Sigma)^W$-module. This implies that $|_\Sigma :C^\infty(T^{0,l}V)^K \to C^\infty(T^{0,l}\Sigma)^W $ is surjective.
\end{proof}

The next Lemma describes the smooth $G$-invariant tensors on a tube $\mathcal{U}=G\times_K V$ in terms of smooth $K$-invariant tensors on the slice $V$. 
\begin{lemma}
\label{tube}
Let $K\subset G$ be Lie groups with $K$ compact, and $V$ be a $K$-representation. Define $\mathcal{U}=G\times_K V$ to be the quotient of $G\times V$ by the free action of $K$ given by $k\cdot (g,v)= (g k^{-1}, kv)$, and identify $V$ with the subset of $\mathcal{U}$ which is the image of $\{1\}\times V\subset G\times V$ under the natural quotient projection $G\times V \to \mathcal{U}$.

Then there is a $K$-representation $H$ and an isomorphism
$$ C^\infty(T^{0,l} V)^K\times C^\infty (V,H)^K \to C^\infty(T^{0,l}\mathcal{U})^G$$
Under this identification the restriction map $$|_V :C^\infty(T^{0,l}\mathcal{U})^G \to C^\infty(T^{0,l} V)^K $$
corresponds to projection onto the first factor. In particular $|_V$ is onto.
\end{lemma}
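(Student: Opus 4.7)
The plan is to exploit the natural decomposition of $T\mathcal{U}$ along the slice $V$ and parametrize a $G$-invariant tensor by its values at points of $V$. Since $K$ is compact, I begin by fixing an $\mathrm{Ad}(K)$-invariant inner product on $\mathfrak{g}$ and letting $\mathfrak{m}$ be the orthogonal complement to $\mathfrak{k}$. This gives a $K$-invariant splitting $\mathfrak{g}=\mathfrak{k}\oplus\mathfrak{m}$.

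Next I would identify $T_v\mathcal{U}$ at a point $v\in V\subset \mathcal{U}$. The quotient $G\times V\to \mathcal{U}$ has vertical tangent space at $(1,v)$ equal to $\{(-X,Xv):X\in\mathfrak{k}\}\subset \mathfrak{g}\oplus V$, so $T_v\mathcal{U}\cong \mathfrak{m}\oplus V$, with $\mathfrak{m}$ complementary to $T_vV\cong V$. This identification is $K$-equivariant, where $K$ acts on $V$ through the given representation and on $\mathfrak{m}$ through $\mathrm{Ad}$. By $G$-equivariance, any $\beta\in C^\infty(T^{0,l}\mathcal{U})^G$ is determined by its values along $V$, and these values assemble into a smooth $K$-equivariant map
$$ V\longrightarrow \bigl((V\oplus\mathfrak{m})^*\bigr)^{\otimes l}.$$
Conversely, any such $K$-equivariant map extends to a $G$-invariant tensor on $\mathcal{U}$ by the formula $\beta_{[g,v]}=g_*\bigl(\beta_v\bigr)$, which is well defined precisely because the map was taken to be $K$-equivariant. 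Hence we have an isomorphism
$$C^\infty\bigl(V,\,((V\oplus\mathfrak{m})^*)^{\otimes l}\bigr)^K\;\xrightarrow{\;\sim\;}\;C^\infty(T^{0,l}\mathcal{U})^G.$$

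Now I would expand $((V\oplus\mathfrak{m})^*)^{\otimes l}=\bigoplus_{S\subset\{1,\dots,l\}}\bigotimes_{i\in S} V^*\otimes\bigotimes_{i\notin S}\mathfrak{m}^*$ as $K$-representations. The summand $S=\{1,\dots,l\}$ equals $(V^*)^{\otimes l}$, and $K$-invariant smooth maps $V\to (V^*)^{\otimes l}$ are precisely the elements of $C^\infty(T^{0,l}V)^K$. Setting $H=\bigoplus_{S\subsetneq\{1,\dots,l\}}\bigotimes_{i\in S}V^*\otimes\bigotimes_{i\notin S}\mathfrak{m}^*$, the remaining summands give $C^\infty(V,H)^K$. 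Combining yields the desired isomorphism
$$ C^\infty(T^{0,l}V)^K\times C^\infty(V,H)^K\;\xrightarrow{\;\sim\;}\;C^\infty(T^{0,l}\mathcal{U})^G.$$
To verify that restriction to $V$ corresponds to projection on the first factor, note that $\beta|_V$ at $v$ evaluates $\beta_v$ on vectors in $T_vV\cong V$, i.e.\ reads off exactly the component of $\beta_v$ in $(V^*)^{\otimes l}$, which is the first summand.

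I do not expect a serious obstacle; the only care needed is in the tangent-space bookkeeping (checking the identification $T_v\mathcal{U}\cong V\oplus\mathfrak{m}$ is $K$-equivariant) and in confirming that the $G$-invariant extension of a $K$-equivariant map on $V$ is well defined and smooth across $\mathcal{U}$, which follows from the smoothness of the quotient $G\times V\to \mathcal{U}$ and the freeness of the $K$-action.
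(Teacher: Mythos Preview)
Your argument is correct and follows essentially the same route as the paper: identify a $G$-invariant tensor on $\mathcal{U}$ with its values along $V$, viewed as a $K$-equivariant map into $(T^*_p\mathcal{U})^{\otimes l}$, then split off the $(V^*)^{\otimes l}$ summand and call the complement $H$. The only difference is cosmetic---you make the trivialization $T_{[1,v]}\mathcal{U}\cong V\oplus\mathfrak{m}$ explicit via a reductive splitting $\mathfrak{g}=\mathfrak{k}\oplus\mathfrak{m}$ and thereby write $H$ concretely, whereas the paper simply invokes an abstract $K$-invariant complement of $(V^*)^{\otimes l}$ in $(T^*_p\mathcal{U})^{\otimes l}$.
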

\begin{proof}
To describe $H$, let $p\in \mathcal{U}$ be the image of $(1,0)\in G\times V$ in $\mathcal{U}$. Then $(V^*)^{\otimes l}$ is a $K$-invariant subspace of $(T^*_p\mathcal{U})^{\otimes l}$, and we define $H$ to be its $K$-invariant complement, so that
$$ (T^*_p\mathcal{U})^{\otimes l}= (V^*)^{\otimes l} \oplus H$$  
as $K$-representations.

We define $\Psi: C^\infty(T^{0,l} V)^K\times C^\infty (V,H)^K \to C^\infty(T^{0,l}\mathcal{U})^G $ in the following way: Given $(\beta_1 ,\beta_2)\in C^\infty(T^{0,l} V)^K\times C^\infty (V,H)^K$, let $\tilde{\beta}:G\times V\to T^{0,l}\mathcal{U}$ be given by
$$ \tilde{\beta} (g,v)= g\cdot (\beta_1 (v) +\beta_2 (v)) $$
Since $\tilde{\beta}$ is $K$-invariant, it descends to $\beta = \Psi(\beta_1, \beta_2) :\mathcal{U}\to T^{0,l}\mathcal{U} $.

The map $\beta$ is smooth because $\tilde{\beta}$ is smooth and the action of $K$ on $G\times V$ is free. Moreover $\beta$ is clearly a $G$-invariant cross-section of the bundle $T^{0,l}\mathcal{U}\to \mathcal{U}$, and $\beta |_V = \beta_1$.
\end{proof}

Now the proof of Theorem \ref{mainthm} essentially follows from Lemmas \ref{polarrep}, \ref{tube}, together with the Slice Theorem (see \cite{Bredon}) and partitions of unity:
\begin{proof}[Proof of Theorem \ref{mainthm}]
First note that it is enough to consider $(0,l)$ tensors. Indeed, $\psi$ for $(k,l)$ tensors equals the composition of $\psi$ for $(0,k+l)$-tensors with raising and lowering indices (using the Riemannian metric on $M$) to transform between $(k,l)$-tensors and $(0,k+l)$-tensors.

It is enough to prove surjectivity of $\psi$ locally around each orbit in $M$, because of the existence of $G$-invariant partitions of unity subject to any cover by $G$-invariant open sets in $M$.

So let $p\in M$ be an arbitrary point, with orbit $Gp$, isotropy $K=G_p$, and slice $V=(T_pGp)^\perp$. The Slice Theorem (see \cite{Bredon}) then says that for an open $G$-invariant tubular neighborhood $\mathcal{U}$ of the orbit $Gp$ there is a $G$-equivariant diffeomorphism 
$$ E: G\times_K V\to \mathcal{U}$$
From now on we we will identify $\mathcal{U}$ with $G\times_K V$ through $E$.

The slice representation of $K$ on $V$ is polar (see \cite{PalaisTerng87}). If $\Sigma\subset V$ is a section with generalized Weyl group $W(\Sigma)$, the quotients $\mathcal{U} /G$, $V/K$ and $\Sigma/W$ are isometric. 

Since the inclusion $\Sigma\to\mathcal{U}$ factors as $\Sigma\to V\to \mathcal{U}$, the restriction map $\psi$ factors as $\psi=|_\Sigma^V\circ |_V^\mathcal{U}$, where
$$ |_\Sigma^V:C^\infty(T^{0,l}V)^K \to C^\infty(T^{0,l}\Sigma)^W \qquad |_V^\mathcal{U}: C^\infty(T^{0,l}\mathcal{U})^G\to C^\infty(T^{0,l}V)^K $$
Both these maps are surjective, by Lemmas \ref{polarrep} and \ref{tube}. Therefore $\psi$ is surjective.
\end{proof}

Now we turn to Corollary \ref{symmetrizer}, about $(0,l)$-tensors with symmetry properties, such as exterior forms and symmetric tensors.
\begin{proof}[Proof of Corollary \ref{symmetrizer}]
The Schur functor $\mathbb{S}_\lambda$ is defined in terms of a certain element $c_\lambda \in \mathbb{Z}S_l$ in the group ring $\mathbb{Z}S_l$, called the Young symmetrizer associated to $\lambda$ --- see \cite{FultonHarris} Lecture 6. Indeed, given a vector space $V$, the group $S_l$ acts on $V^{\otimes l}$, and so $c_\lambda$ determines a linear map $V^{\otimes l} \to V^{\otimes l}$. The image of this map is defined to be $\mathbb{S}_\lambda (V)$.

Thus $C^\infty (\mathbb{S}_\lambda (T^*M))$ is simply the image of the natural map $$c_\lambda:C^\infty(T^{0,l}M)\to C^\infty(T^{0,l}M)$$ and similarly for $C^\infty (\mathbb{S}_\lambda (T^*M))^G$ (because the actions of $G$ and $S_l$ commute), and $C^\infty (\mathbb{S}_\lambda (T^*\Sigma))^W$.

Since the restriction map $\psi$ is $S_l$-equivariant and surjective, it takes the image of $$c_\lambda:C^\infty(T^{0,l}M)^G\to C^\infty(T^{0,l}M)^G$$ onto the image of $$c_\lambda:C^\infty(T^{0,l}\Sigma)^W\to C^\infty(T^{0,l}\Sigma)^W$$
completing the proof.
  \end{proof}

\section{Polarizations and finite reflection groups}
\label{polarizations}
An alternative way of proving special cases of Theorem \ref{Tevelev} is given by the polarization technique. This has the advantage of providing explicit lifts, which we exploit to give a proof of Theorem \ref{metric}.

We start by recalling the definition of polarizations (see \cite{Schwarz07} for a reference). Let $U$ be an Euclidean vector space, and $H\to O(U)$ be a representation of the group $H$. Consider the diagonal action of $H$ on $m$ copies of $U$, and the corresponding algebra of invariant ($m$-variable) polynomials $\R[U^m]^H$. Identify $\R[U]^H$ with the elements of $\R[U^m]^H$ which depend only on the first variable.

The method of polarizations consists of generating multi-variable invariants from single-variable invariants. Indeed, assuming $f\in\R[U]^H$ is homogeneous of degree $d$, let $t_1, \ldots t_m$ be formal variables, and formally expand 
$$f(t_1v_1 +\ldots + t_mv_m)=\sum_{r_1+\ldots +r_m=d}t_1^{r_1}\cdots t_m^{r_m} f_{r_1, \ldots, r_m}(v_1, \ldots, v_m)$$
Then each $f_{r_1, \ldots, r_m}$ belongs to $\R[U^m]^H$, and is called a polarization of $f$. 

An alternative but equivalent definition of polarizations is given in terms of \emph{polarization operators} --- see \cite{Wallach93}. These are differential operators $D_{ij}$ (for $1\leq i,j\leq m$) on $\R[U^m]^H$ defined by
$$ (D_{ij} f ) (u_1, \ldots u_m)= \left. \frac{d}{dt}\right|_{t=0} f(u_1, \ldots, u_j+tu_i, \ldots u_m) $$
Then one defines the subalgebra $\mathcal{P}^m\subset \R[U^m]^H$ of polarizations to be the smallest subalgebra of $\R[U^m]^H$ containing $\R[U]^H$ and stable under the operators $D_{ij}$.

For example, if $f\in\R[U]^H$, then the tensors $df=D_{2,1}f \in \R[U^2]^H$ and Hess$f=D_{2,1}(D_{3,1} f)\in\R[U^3]^H$ are polarizations. Similarly, if $f_1, \ldots f_p \in \R[U]^H$, then $df_1\otimes df_2\otimes \cdots \otimes df_p=(D_{2,1}f_1)\cdots(D_{p+1,1}f_p) $ is a polarization, and so is $df_1\wedge \cdots \wedge df_p$. (Here we are identifying tensor fields with multi-variable functions as in section \ref{MVCRT}.)

Now consider the special case where $H=W_0$ is a finite group generated by reflections on $U=\Sigma$. If $W_0$ is irreducible of type $A$, $B$, or dihedral, then $\mathcal{P}^m=\R[\Sigma^m]^{W_0}$ by \cite{Weyl}, \cite{Hunziker97}.

It was noted by Wallach \cite{Wallach93} that $\R[\Sigma^m]^{W_0}$ is \emph{not} generated by polarizations for $W_0$ of type $D_n$ for $n>3$ and $m>1$. He proposed a definition of generalized polarizations, and showed that these do generate all multi-variable invariants for type $D$. Unfortunately Wallach's generalized polarizations fail to generate all multi-variable invariants for $W_0$ of type $F_4$ (see \cite{Hunziker97}).

For $W_0$ of general type, even though $\mathcal{P}^m \neq \R[\Sigma^m]^{W_0}$, one can still identify geometrically interesting subspaces of $\R[\Sigma^m]^{W_0}$ which are contained in $\mathcal{P}^m$. For example, Solomon's Theorem \cite{Solomon63} states that the subspace $\R[\Sigma, \Lambda^{m-1} \Sigma^*]^{W_0} \subset \R[\Sigma^m]^{W_0} $ of exterior $(m-1)$-forms is contained in $\mathcal{P}^m$. Another example is the main ingredient in the proof of Theorem \ref{metric}:
\begin{theorem}[Hessian Theorem --- \cite{Mendes11}]
\label{hessian}
Let $W_0\subset O(\Sigma)$ be a finite group generated by reflections. Then every $W_0$-invariant symmetric $2$-tensor field on $\Sigma$ is a sum of terms of the form $a$Hess$(b)$, for $a,b\in \R[\Sigma]^{W_0}$.
\end{theorem}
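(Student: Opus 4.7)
The plan is to reduce Theorem \ref{hessian} to a finite algebraic check, carried out one irreducible reflection group at a time. First, the Malgrange division theorem combined with the density of polynomial invariants in smooth invariants --- exactly as in the proof of Lemma \ref{polarrep} --- lets me replace the smooth statement by the polynomial one: show that, as a module over $\R[\Sigma]^{W_0}$, the space $\R[\Sigma,\mathrm{Sym}^2\Sigma^*]^{W_0}$ is generated by the Hessians of invariants. Writing $\R[\Sigma]^{W_0}=\R[f_1,\ldots,f_n]$ via Chevalley--Shephard--Todd and applying the Leibniz rule
\[
\mathrm{Hess}(ab)=a\,\mathrm{Hess}(b)+b\,\mathrm{Hess}(a)+da\otimes db+db\otimes da,
\]
I reduce further to a finite generation statement: $\{\mathrm{Hess}(f_i)\}_i$ together with $\{\mathrm{Hess}(f_if_j)\}_{i\le j}$ should span the module.

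Next I would reduce to $W_0$ irreducible. If $W_0=W_1\times W_2$ acts on $\Sigma=\Sigma_1\oplus\Sigma_2$, an invariant symmetric $2$-tensor decomposes into two diagonal blocks on the $\mathrm{Sym}^2\Sigma_i^*$ and one cross block, and by separation of variables the latter lies in $\R[\Sigma_1,\Sigma_1^*]^{W_1}\otimes_{\R}\R[\Sigma_2,\Sigma_2^*]^{W_2}$. The diagonal blocks follow from the statement applied to each factor $W_i$, since the Hessian of an invariant depending only on $\Sigma_1$-variables supports only the $\Sigma_1$-diagonal block. For the cross block, the classical freeness of the module of $W_i$-invariant $1$-forms over $\R[\Sigma_i]^{W_i}$ on basis $\{df_k^{(i)}\}$ (a special case of Solomon's theorem) reduces the problem to the identity
\[
df^{(1)}\otimes df^{(2)}+df^{(2)}\otimes df^{(1)}=\mathrm{Hess}(f^{(1)}f^{(2)})-f^{(2)}\mathrm{Hess}(f^{(1)})-f^{(1)}\mathrm{Hess}(f^{(2)}),
\]
which exhibits any cross term as a sum of the required form.

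Finally I would verify the statement for each irreducible reflection group. When $W_0$ has type $A$, $B$, or dihedral $I_2(p)$ one has $\mathcal{P}^3=\R[\Sigma^3]^{W_0}$ (Weyl, Hunziker), so $\R[\Sigma,\mathrm{Sym}^2\Sigma^*]^{W_0}\subset\mathcal{P}^3$; applying the polarization operators $D_{21}$ and $D_{31}$ to basic invariants already produces Hessians, from which the claim follows by projecting onto the $S_2$-symmetric part and lifting. For the remaining irreducible types --- $D_n$ with $n\ge 4$ and $E_6,E_7,E_8,F_4,H_3,H_4$ --- polarizations no longer generate the full multi-variable invariant ring, and my plan is to check the Hessian claim by a direct graded-module computation using the known explicit forms of the basic invariants in each case: one compares, degree by degree, the span of $\{a\,\mathrm{Hess}(b)\}$ against the dimension of $\R[\Sigma,\mathrm{Sym}^2\Sigma^*]^{W_0}$ predicted by the Molien series. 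The combinatorial size of this case-by-case comparison for the exceptional types and general $D_n$ is the main obstacle, and is what forces the verification to be computer-assisted.
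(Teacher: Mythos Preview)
Your overall architecture matches the paper's proof in the Appendix almost exactly: reduce to the polynomial statement, reduce to the irreducible case via the product decomposition and Solomon's theorem for the cross block, dispose of types $A$, $B$, and $I_2(p)$ via the Weyl--Hunziker polarization theorems, and handle the six exceptional groups $H_3,H_4,F_4,E_6,E_7,E_8$ by a computer-assisted comparison of the Hessian span against the graded dimensions of $\R[\Sigma,\mathrm{Sym}^2\Sigma^*]^{W_0}$ (the paper computes these via fake degrees in CHEVIE and then exhibits an explicit free basis inside $\{\mathrm{Hess}(\rho_i)\}\cup\{\mathrm{Hess}(\rho_i\rho_j)\}$ by checking linear independence at a single regular point).

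There is, however, a genuine gap in your treatment of type $D_n$. You group $D_n$ ($n\ge 4$) with the exceptional groups and propose a ``case-by-case'' computer-assisted verification, but $D_n$ is an \emph{infinite} family, so no finite list of machine checks constitutes a proof. The paper does not computer-verify $D_n$: it observes that although ordinary polarizations fail to generate $\R[\Sigma^m]^{W_0}$ for $D_n$, the symmetric $2$-tensor case still follows uniformly from \cite{Hunziker97}, Theorem~3.1 (in the spirit of Wallach's generalized polarizations for type $D$). You need some such uniform argument for $D_n$; without it the proposal is incomplete. If you intended instead a closed-form symbolic computation using the explicit $D_n$ invariants $p_{2},p_{4},\ldots,p_{2n-2},x_1\cdots x_n$, that is a different (and potentially viable) route, but it is not what ``computer-assisted case-by-case'' means, and you would have to actually carry it out.
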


For the convenience of the reader, we provide a proof of Theorem \ref{hessian} above in the Appendix.

Now assume $K\subset O(V)$ is a polar representation of the compact group $K$ with section $\Sigma$, and generalized Weyl group $W$. Recall that the connected component of the identity $K_0$ is polar with the same section $\Sigma$, and denote by $W_0$ its generalized Weyl group. By \cite{Dadok85}, $W_0$ is a finite group generated by reflections. Since the operators $D_{ij}$ commute with the restriction map $|_{\Sigma^m}: \R[V^m]^{K_0}\to \R[\Sigma^m]^{W_0}$, and the single-variable invariants coincide by the Chevalley Restriction Theorem, the image of $|_{\Sigma^m}$ must contain $\mathcal{P}^m$. In particular, this gives an alternative proof of Theorem \ref{Tevelev} in the special case that $W_0$ is of classical type -- see \cite{Hunziker97}.

Similarly, Theorem \ref{hessian} implies surjectivity of the restriction map for symmetric $2$-tensors. In fact, we have the sharper statement:
\begin{lemma}
Let $K\subset O(V)$ be a polar representation of the compact group $K$, with section $\Sigma\subset V$ and generalized Weyl group $W$. Consider the restriction map for symmetric $2$-tensor fields $|_\Sigma : C^\infty(\mathrm{Sym}^2 V)^K\to C^\infty(\mathrm{Sym}^2 \Sigma)^W$.

This map is surjective. Moreover, given $\beta \in C^\infty(\mathrm{Sym}^2 \Sigma)^W$ there is $\tilde{\beta} \in C^\infty(\mathrm{Sym}^2 V)^K $ such the $\tilde{\beta}|_\Sigma=\beta$ and satisfying the following property:

For all $q\in V$, and $X,Y\in T_qV$ such that $X$ is vertical (that is, tangent to the $K$-orbit through $q$) and $Y$ is horizontal (that is, normal to the $K$-orbit through $q$), we have $\tilde{\beta}(X,Y)=0$.
\end{lemma}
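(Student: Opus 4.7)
The plan is to combine Theorem~\ref{hessian} (the Hessian Theorem), the classical Chevalley Restriction Theorem, and the observation that for any smooth $K$-invariant $f\colon V\to\R$ the Hessian $\mathrm{Hess}(f)$ is automatically block-diagonal with respect to the vertical/horizontal decomposition.

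By a standard averaging argument over $K/K_0$ (as in the proof of Corollary~\ref{polarMVCRT}), it suffices to treat the connected case $K=K_0$, so that $W=W_0$ is a finite reflection group. For polynomial $\beta\in \R[\Sigma,\mathrm{Sym}^2\Sigma^*]^W$, Theorem~\ref{hessian} yields a representation $\beta=\sum_i a_i\,\mathrm{Hess}(b_i)$ with $a_i,b_i\in \R[\Sigma]^W$. Applying Chevalley to lift each $a_i,b_i$ to $K$-invariant polynomials $\tilde a_i,\tilde b_i$ on $V$, I would set
$$\tilde\beta \;=\; \sum_i \tilde a_i\,\mathrm{Hess}(\tilde b_i).$$
Since $\Sigma\subset V$ is a linear (hence flat, totally geodesic) subspace, the pull-back of $\mathrm{Hess}(\tilde b_i)$ to $\Sigma$ as a symmetric $2$-tensor coincides with the intrinsic $\mathrm{Hess}(b_i)$, so $\tilde\beta|_\Sigma=\beta$. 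The extension from polynomial to arbitrary smooth $W$-invariant $\beta$ proceeds as in Lemma~\ref{polarrep}, via Malgrange division applied to finitely many Hessian generators and density of polynomial invariants in smooth ones; block-diagonality is preserved under multiplication by smooth $K$-invariants, so it passes to the smooth extensions.

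The key and hardest step is showing that $\mathrm{Hess}(\tilde b)$ is block-diagonal for every $\tilde b\in C^\infty(V)^K$. Given vertical $X=\xi q$ (with $\xi\in\mathfrak{k}$) and horizontal $Y\in\mathfrak{H}_q$ at $q\in V$, a direct computation (using $\xi\cdot\tilde b\equiv 0$ together with $\nabla_Y\tilde X=\xi Y$ in flat $V$, where $\tilde X(v)=\xi v$) gives
$$\mathrm{Hess}(\tilde b)_q(X,Y)=\langle \xi\,\nabla\tilde b(q),\,Y\rangle,$$
so the task reduces to proving $\xi\,\nabla\tilde b(q)\in\mathfrak{V}_q$. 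Translating by $K$ to assume $q\in\Sigma$ and using continuity from the regular stratum (on which $\mathfrak{H}_q=\Sigma$), one finds $\nabla\tilde b(q)\in\Sigma$, and in fact $\nabla\tilde b(q)\in\Sigma^{W_q}$ by the $W_q$-invariance forced by $K_q$-equivariance of $\nabla\tilde b$. The polar root-space decomposition $V=\Sigma\oplus\bigoplus_\alpha V_\alpha$ (available by Dadok's reduction to $s$-representations, as already invoked in Corollary~\ref{polarMVCRT}) then yields $\mathfrak{V}_v=\bigoplus_{\alpha(v)\neq 0}V_\alpha\subseteq\mathfrak{V}_q$ for every $v\in\Sigma^{W_q}$, since $v\in\Sigma^{W_q}$ forces $\{\alpha:\alpha(v)\neq 0\}\subseteq\{\alpha:\alpha(q)\neq 0\}$. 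Applying this with $v=\nabla\tilde b(q)$ produces $\xi\,\nabla\tilde b(q)\in\mathfrak{V}_{\nabla\tilde b(q)}\subseteq\mathfrak{V}_q$, as required. The genuine subtlety is handling singular points $q\in\Sigma$, where $\mathfrak{H}_q\supsetneq\Sigma$ and the regular-point identification $\mathfrak{V}_q=\Sigma^\perp$ breaks down; it is precisely here that the root-space refinement is needed.
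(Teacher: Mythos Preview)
Your overall architecture matches the paper's proof exactly: reduce to $K_0$, invoke Theorem~\ref{hessian} to write $\beta=\sum_i a_i\,\mathrm{Hess}(b_i)$, lift $a_i,b_i$ via Chevalley, pass to smooth coefficients via Field/Malgrange, and average over $K/K_0$. The only substantive difference is in how you verify that $\mathrm{Hess}(\tilde b)$ is block-diagonal for $\tilde b\in C^\infty(V)^K$.

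You compute $\mathrm{Hess}(\tilde b)_q(X,Y)=\langle \xi\,\nabla\tilde b(q),Y\rangle$ and then marshal the restricted-root decomposition (via Dadok's reduction to $s$-representations) together with Steinberg's theorem on isotropy groups of reflection groups to conclude $\mathfrak{V}_{\nabla\tilde b(q)}\subseteq\mathfrak{V}_q$. This is correct, but considerably heavier than needed. The paper bypasses all of that structure with a short continuity argument: after choosing the section so that $q,Y\in\Sigma$, extend $X$ to a \emph{constant} (parallel) vector field on $V$. Then $X(p)\perp\Sigma$ for every $p\in\Sigma$, hence $X(p)$ is vertical at every regular $p\in\Sigma$, so $f:=d\tilde b(X)$ vanishes on the regular part of $\Sigma$ and, by continuity, on all of $\Sigma$. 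Since $Y$ is tangent to $\Sigma$, $\mathrm{Hess}(\tilde b)(X,Y)=Y(f)=0$. No root spaces, no Steinberg, no appeal to the $s$-representation model. Your route does give a more explicit structural reason for the vanishing, but the paper's argument handles the singular-point subtlety you flag with a single line.
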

\begin{proof}
Let $K_0$ be the connected component of the identity. It is polar with the same section $\Sigma$, and generalized Weyl group $W_0$. By \cite{Dadok85}, $W_0$ is generated by reflections.

Let $\beta\in C^\infty(\mathrm{Sym}^2 \Sigma)^W$. By Theorem \ref{hessian} together with \cite{Field77}, Lemma 3.1, $\beta$ is of the form $\beta=\sum_i a_i \mathrm{Hess}(b_i)$, where $a_i, b_i\in C^\infty(\Sigma)^{W_0}$. By the Chevalley Restriction Theorem, $a_i,b_i$ extend uniquely to $\tilde{a}_i, \tilde{b}_i\in C^\infty (V)^{K_0}$.

Define $\tilde{\beta}_0=\sum_i \tilde{a}_i\mathrm{Hess}(\tilde{b}_i)$ and 
$$ \tilde{\beta}=\frac{1}{|K/K_0|}\sum_{h\in K/K_0} h \tilde{\beta_0}$$
Then $\tilde{\beta}|_\Sigma=\beta$ by the same argument as in Corollary \ref{polarMVCRT}.

To show that $\tilde{\beta}$ satisfies the additional property in the statement of the Lemma, it is enough to do so for each Hess$(\tilde{\beta}_i)$. Changing the section $\Sigma$ if necessary, we may assume that $q,Y\in \Sigma$. Extend the given $X,Y\in T_qV$ to parallel vector fields (in the Euclidean metric), also denoted by $X,Y$. Let $f=d\tilde{\beta}_i(X)$.

We claim that $f|_\Sigma$ is identically zero. Indeed, since $X(q)$ is vertical, it is orthogonal to $\Sigma$, and so $X(p)$ is orthogonal to $\Sigma$ for every $p\in\Sigma$. Thus, for regular $p\in\Sigma$, $X(p)$ is vertical. Since $\tilde{\beta}_i$ is constant on orbits, $f(p)=0$ for every regular $p\in\Sigma$, and hence on all of $\Sigma$ by continuity.

Therefore Hess$(\tilde{\beta}_i)(X,Y)= df(Y)=0$, because $Y\in\Sigma$. 
\end{proof}

The following Lemma is needed in the proof of Theorem \ref{metric}.
\begin{lemma}
\label{positive}
Let $V$ be a polar $K$-representation with section $\Sigma \subset V$ and generalized Weyl group $W$. Let $\tilde{\sigma} \in C^\infty(\mathrm{Sym}^2 V)^K$, and $\sigma=\tilde{\sigma}|_\Sigma$. Then $\sigma(0)$ is positive definite if and only if $\tilde{\sigma}(0)$ is positive definite.
\end{lemma}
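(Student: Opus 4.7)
The plan is to prove the two implications separately. One direction is immediate: the restriction of a positive definite symmetric bilinear form on $V$ to any subspace is again positive definite, so if $\tilde{\sigma}(0)$ is positive definite on $V$ then $\sigma(0)$ is positive definite on $T_0\Sigma = \Sigma$.

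For the converse I would use two ingredients. First, since $0 \in V$ is a fixed point of the linear $K$-action, the differential $(dk)_0$ coincides with $k$ itself under the canonical identification $T_0 V = V$; consequently $K$-invariance of the tensor field $\tilde{\sigma}$ specialises at the point $0$ to $K$-invariance of $\tilde{\sigma}(0)$ as a symmetric bilinear form, that is,
$$\tilde{\sigma}(0)(kv, kw) = \tilde{\sigma}(0)(v,w)$$
for every $k \in K$ and $v,w \in V$. Second, because $\Sigma$ is a section of the polar representation, every $K$-orbit in $V$ meets $\Sigma$.

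Combining these, given any nonzero $v \in V$, choose $k \in K$ with $s := kv \in \Sigma$. Then $s \neq 0$, since $k$ is invertible, and
$$\tilde{\sigma}(0)(v,v) \;=\; \tilde{\sigma}(0)(s,s) \;=\; \sigma(0)(s,s) \;>\; 0,$$
where the first equality uses the $K$-invariance of $\tilde{\sigma}(0)$, the second is the definition of the restriction of a $(0,2)$-tensor at the point $0 \in \Sigma$, and the final inequality is the hypothesis that $\sigma(0)$ is positive definite. This shows $\tilde{\sigma}(0)$ is positive definite on $V$.

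The argument presents no genuine obstacle; the only point worth checking carefully is the reduction of tensor-field $K$-invariance at the fixed point $0$ to linear $K$-invariance of the value $\tilde{\sigma}(0)$, which is immediate from the linearity of the $K$-action and the fact that $0$ is fixed.
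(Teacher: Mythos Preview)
Your proof is correct and takes a genuinely different route from the paper's. The paper decomposes $V$ into $K_0$-irreducible summands $V=\R^m\oplus V_1\oplus\cdots\oplus V_k$, identifies the corresponding decomposition $\Sigma=\R^m\oplus\Sigma_1\oplus\cdots\oplus\Sigma_k$, and then invokes Schur's Lemma to write $\tilde{\sigma}(0)=A\oplus\lambda_1\mathrm{Id}_{V_1}\oplus\cdots\oplus\lambda_k\mathrm{Id}_{V_k}$ and $\sigma(0)=A\oplus\lambda_1\mathrm{Id}_{\Sigma_1}\oplus\cdots\oplus\lambda_k\mathrm{Id}_{\Sigma_k}$, from which the equivalence is read off. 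You instead use only two facts: that the value $\tilde{\sigma}(0)$ is a $K$-invariant bilinear form (because $0$ is a fixed point of the linear action), and that every $K$-orbit meets $\Sigma$ (the defining property of a section). These immediately give $\tilde{\sigma}(0)(v,v)=\sigma(0)(kv,kv)>0$ for suitable $k$. Your argument is more elementary and avoids the representation-theoretic machinery entirely; the paper's approach, while heavier, has the advantage of exhibiting the precise block structure of $\tilde{\sigma}(0)$, which could be useful if one needed finer information than mere positivity.
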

\begin{proof}
Denote by $K_0$ the connected subgroup of $K$ containing the identity. Recall that the action of $K_0$ is polar with the same section $\Sigma$. Denote by $W_0$ its generalized Weyl group. Consider a decomposition of $V$ into $K_0$-invariant subspaces
$$ V=\R^m\oplus V_1\oplus\cdots\oplus V_k$$
where $K_0$ acts trivially on $\R^m$, and each $V_i$ is irreducible and non-trivial.

By Theorem 4 in \cite{Dadok85}, each $V_i$ is a polar $K_0$-representation, with section $\Sigma_i=\Sigma\cap V_i$, and we have the decomposition into $W_0$-invariant subspaces
$$ \Sigma = \R^m\oplus \Sigma_1\oplus\cdots\oplus \Sigma_k$$
Moreover $W_0$ splits as a product $W_1\times\cdots\times W_k$ (see section 2.2 in \cite{Humphreys}), where $W_i$ is the generalized Weyl group associated to the section $\Sigma_i \subset V_i$, so that $\Sigma_i$ are pairwise inequivalent as $W_0$-representations. This implies that $V_i$ are pairwise inequivalent as $K_0$-representations.

Since the quotients $V_i/K_0$ and $\Sigma_i/W_0$ are isometric, irreducibility of $V_i$ as a $K_0$-representation implies irreducibility of $\Sigma_i$ as a $W_0$-representation. (Indeed, a general representation of a compact group $H$ on Euclidean space $\R^n$ is irreducible if and only if the quotient $S^{n-1}/H$ has diameter less than $\pi/2$)

By Schur's Lemma together with the assumption $\tilde{\sigma}|_\Sigma = \sigma$, 
$$ \sigma(0)=A\oplus\lambda_1 \mathrm{Id}_{\Sigma_1}\oplus\cdots\oplus \lambda_k\mathrm{Id}_{\Sigma_k}$$
$$ \tilde{\sigma}(0)=A\oplus\lambda_1 \mathrm{Id}_{V_1}\oplus\cdots\oplus \lambda_k\mathrm{Id}_{V_k}$$
where $A$ is a symmetric $m\times m$ matrix, and $\lambda_i \in\R$.

Therefore $\sigma(0) >0$ if and only if $\tilde{\sigma}(0) >0$.
\end{proof}

Now we are ready to prove Theorem \ref{metric}:
\begin{proof}[Proof of Theorem \ref{metric}]
As in the proof of Theorem \ref{mainthm}, we use partitions of unity and the Slice Theorem to reduce to the case where $M$ is a tube $\mathcal{U}=G\times_K V$, and $V$ is a polar representation. Let $\Sigma \subset V$ be a section, with generalized Weyl group $W$, so that $M/G=V/K=\Sigma/W$.

Note that it suffices to extend the given Riemannian metric $\sigma \in C^\infty(\mathrm{Sym}^2\Sigma)^W$ to a $G$-invariant Riemmanian metric on a possibly smaller tube $G\times_K V^\epsilon$ around the orbit $G/K$, for some $\epsilon >0$.

By Corollary \ref{symmetrizer}, $\sigma$ extends to $\beta_1\in C^\infty (\mathrm{Sym}^2 V)^K$. By Lemma \ref{positive}, $\beta_1(0)$ is positive-definite, and so by continuity, $\beta_1 >0$ on $V^\epsilon$ for some small $\epsilon>0$.

Choose any smooth, $K$-invariant and positive-definite $\beta_2 :V \to \mathrm{Sym}^2(T_KG/K)$. Then, by Lemma \ref{tube}, the pair $(\beta_1, \beta_2)$ defines $\tilde{\sigma}\in C^\infty(\mathrm{Sym}^2M)^G$, which is positive-definite on $G\times_K V^\epsilon$ and extends the given $\sigma$. By construction, $\Sigma$ is $\tilde{\sigma}$-orthogonal to $G$-orbits.
\end{proof}

\appendix
\section{Appendix --- Hessian Theorem for finite reflection groups}

In this section we provide a proof of Theorem \ref{hessian} for all finite reflection groups $W\subset O(\Sigma)$. Note that as far as the proof of Theorem \ref{metric} is concerned, the only case of Theorem \ref{hessian} needed is that of crystallographic reflection groups (see \cite{Humphreys} for a definition). Our proof includes the non-crystallographic case for the sake of completeness.

The structure of the proof is as follows. First we reduce to the case where $W$ is irreducible --- see Lemma \ref{product}. Then we point out that for $W$ irreducible of classical type, Theorem \ref{hessian} follows from more general polarization results due to Weyl \cite{Weyl} and Hunziker \cite{Hunziker97}. Finally we tackle the case of the exceptional groups with the help of a computer.

Recall some facts about finite reflection groups: First, the algebra of invariants, $\R[\Sigma]^W$, is a free polynomial algebra with as many generators as the dimension of $\Sigma$. This is known as Chevalley's Theorem --- see \cite {Bourbaki} Chapter V. Such a set of homogeneous generators is called a set of \emph{basic invariants}. Second, $\Sigma$ is reducible as a $W$-representation if and only if $\Sigma=\Sigma_1\times \Sigma_2$ and $W=W_1\times W_2$ for two reflection groups $W_k\subset O(\Sigma_k)$ --- see section 2.2 in \cite{Humphreys}. Because of the latter, the following proposition reduces the proof of the Hessian Theorem to the irreducible case.

\begin{lemma}
\label{product}
Let $W_k\subseteq O(\Sigma_k)$, $k=1,2$ be two finite reflection groups in the Euclidean vector spaces $\Sigma_k$, and let $W=W_1\times W_2\subset O(\Sigma)=O(\Sigma_1\times \Sigma_2)$. Then the conclusion of the Hessian Theorem holds for $W\subset O(\Sigma)$ if and only if it holds for both  $W_k\subseteq O(\Sigma_k)$, $k=1,2$.
\end{lemma}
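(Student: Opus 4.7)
The plan is to decompose $W$-invariant symmetric 2-tensor fields on $\Sigma$ according to the product structure and exploit the Leibniz-type identity
$$\mathrm{Hess}(fg) = g\,\mathrm{Hess}(f) + f\,\mathrm{Hess}(g) + df\otimes dg + dg\otimes df,$$
which converts symmetrized products $df\otimes dg+dg\otimes df$ into sums of Hessian terms. The forward direction ($\Rightarrow$) is the easier half: given a $W_1$-invariant polynomial symmetric 2-tensor field $\beta_1$ on $\Sigma_1$, I would pull it back via the projection $\pi_1:\Sigma\to\Sigma_1$, apply the hypothesis on $W$ to obtain $\pi_1^*\beta_1=\sum_i a_i\,\mathrm{Hess}(b_i)$ with $a_i,b_i\in\R[\Sigma]^W$, and then restrict along $\iota:\Sigma_1\hookrightarrow\Sigma$, $x\mapsto(x,0)$. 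Since $\iota^*\mathrm{Hess}(b_i)=\mathrm{Hess}(b_i\circ\iota)$ in the $\Sigma_1$-directions and $a_i\circ\iota,b_i\circ\iota\in\R[\Sigma_1]^{W_1}$, this yields the required presentation of $\beta_1$ (and symmetrically for $W_2$).

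For the backward direction ($\Leftarrow$), use the $W$-invariant splitting
$$\mathrm{Sym}^2\Sigma^*=\mathrm{Sym}^2\Sigma_1^*\,\oplus\,(\Sigma_1^*\otimes\Sigma_2^*)\,\oplus\,\mathrm{Sym}^2\Sigma_2^*$$
to write $\beta=\beta_{11}+\beta_{12}+\beta_{22}$, each summand still $W$-invariant. The K\"unneth-type identification $\R[\Sigma]^W=\R[\Sigma_1]^{W_1}\otimes\R[\Sigma_2]^{W_2}$, together with its analogue for symmetric-tensor-valued polynomials, lets me expand $\beta_{11}=\sum_k \alpha_k(x)\cdot g_k(y)$ with $\alpha_k\in\R[\Sigma_1,\mathrm{Sym}^2\Sigma_1^*]^{W_1}$ and $g_k\in\R[\Sigma_2]^{W_2}$. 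Applying the hypothesis on $W_1$ to each $\alpha_k$ and noting that $\mathrm{Hess}_\Sigma$ of a function pulled back from $\Sigma_1$ is block-diagonal with the $\Sigma_1$-Hessian in the $\Sigma_1$-block, one rewrites $\beta_{11}$ (and analogously $\beta_{22}$) as a sum of $a\,\mathrm{Hess}(b)$ with $a,b\in\R[\Sigma]^W$.

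The cross term $\beta_{12}$ is the main obstacle. Here I would invoke the 1-form case of Solomon's theorem (as referenced in Section \ref{polarizations}): for a finite reflection group $W_k\subset O(\Sigma_k)$, every invariant polynomial 1-form on $\Sigma_k$ is generated as a module over $\R[\Sigma_k]^{W_k}$ by exact forms $db$ with $b$ invariant. Combined with the bigrading $\R[\Sigma,\Sigma_1^*\otimes\Sigma_2^*]^W=\R[\Sigma_1,\Sigma_1^*]^{W_1}\otimes\R[\Sigma_2,\Sigma_2^*]^{W_2}$, this writes $\beta_{12}$ as a sum of terms $a_1(x)a_2(y)\,(df_1\otimes df_2+df_2\otimes df_1)$ with $a_k,f_k\in\R[\Sigma_k]^{W_k}$. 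Applying the Leibniz identity above to $f=f_1,\,g=f_2$ expresses each such symmetrized cross-product as $\mathrm{Hess}(f_1f_2)-f_2\,\mathrm{Hess}(f_1)-f_1\,\mathrm{Hess}(f_2)$, completing the decomposition of $\beta_{12}$ into the required Hessian form.
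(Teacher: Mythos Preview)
Your proof is correct and follows essentially the same route as the paper's: both directions use the splitting $\mathrm{Sym}^2\Sigma^*=\mathrm{Sym}^2\Sigma_1^*\oplus(\Sigma_1^*\otimes\Sigma_2^*)\oplus\mathrm{Sym}^2\Sigma_2^*$, the K\"unneth identification $\R[\Sigma]^W=\R[\Sigma_1]^{W_1}\otimes\R[\Sigma_2]^{W_2}$, Solomon's theorem in degree one for the cross block, and the product rule $\mathrm{Hess}(f_1f_2)=f_2\,\mathrm{Hess}(f_1)+f_1\,\mathrm{Hess}(f_2)+df_1\otimes df_2+df_2\otimes df_1$ to absorb the cross terms. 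The only cosmetic difference is that in the backward direction the paper packages the argument by exhibiting an explicit free $\R[\Sigma]^W$-basis of $\R[\Sigma,\mathrm{Sym}^2\Sigma^*]^W$ (namely the Hessians of $\{Q_j\}\cup\{R_j\}\cup\{\rho_i\psi_j\}$), whereas you prove only the spanning statement; since the Hessian Theorem is a spanning statement, your version already suffices.
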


\begin{proof}
Let $i_k:\Sigma_k\to \Sigma_1\times \Sigma_2$ and $p_k:\Sigma_1\times \Sigma_2\to \Sigma_k$ be the natural inclusions and projections. As a $W$-representation, $\mathrm{Sym}^2(\Sigma^*)$ decomposes as
$$ \mathrm{Sym}^2(\Sigma^*) =\mathrm{Sym}^2(\Sigma_1^*)\oplus \mathrm{Sym}^2(\Sigma_2^*)\oplus (\Sigma_1^*\otimes \Sigma_2^*)$$
Denote by $i_{11}$ and $i_{22}$ the natural inclusions of the first two summands. All these maps are $W$-equivariant.

Assume the conclusion of the Hessian Theorem holds for $W\subset O(\Sigma)$. Thus there are $Q_j\in\R[\Sigma]^W$ whose Hessians form a basis for $\R[\Sigma,\mathrm{Sym}^2(\Sigma^*)]^W$. Then the restrictions $Q_j|_{\Sigma_k}=i_k^*Q_j$ generate $\R[\Sigma_k,\mathrm{Sym}^2(\Sigma_k^*)]^{W_k}$ as an $\R[\Sigma_k]^{W_k}$-module.

Indeed, given $\sigma\in \R[\Sigma_k,\mathrm{Sym}^2(\Sigma_k^*)]^{W_k} $, define $$\tilde{\sigma}=i_{kk}\circ\sigma\circ p_k$$ Since $\tilde{\sigma}$ is $W$-equivariant, there are $a_j\in \R[\Sigma]^W$ such that $\tilde{\sigma}=\sum_j a_j\mathrm{Hess}(Q_j)$. Therefore
    $$ \sigma=i_k^*(\tilde{\sigma})=\sum_j (a_j|_{\Sigma_k})\mathrm{Hess}(Q_j|_{\Sigma_k})$$

For the converse, assume the conclusion of the Hessian Theorem holds for $W_k\subset O(\Sigma_k)$. Let $\rho_j\in\R[\Sigma_1]^{W_1}$, $j=1,\ldots n_1$ and $\psi_j\in\R[\Sigma_2]^{W_2}$, $j=1,\ldots n_2$ be basic invariants on $\Sigma_1$ and $\Sigma_2$ respectively, and $Q_j\in\R[\Sigma_1]^{W_1}$, for $j=1,\ldots (n_1^2+n_1)/2$, $R_j\in\R[\Sigma_2]^{W_2}$, for $j=1,\ldots (n_2^2+n_2)/2$ be homogeneous invariants whose Hessians form a basis for the corresponding spaces of equivariant symmetric $2$-tensors.

Claim: The Hessians of the following set of $W=W_1\times W_2$-invariant polynomials on $\Sigma=\Sigma_1\times \Sigma_2$ form a basis for the space of equivariant symmetric $2$-tensors on $\Sigma$:
$$ \{Q_j\} \cup \{R_j\}\cup \{\rho_i\psi_j , \quad i=1\ldots n_1,\  j=1\ldots n_2\}$$

Indeed, $\R[\Sigma,\mathrm{Sym}^2(\Sigma^*)]^W$ decomposes as
$$ \R[\Sigma,\mathrm{Sym}^2(\Sigma_1^*)]^W\oplus \R[\Sigma,\mathrm{Sym}^2(\Sigma_2^*)]^W\oplus\R[\Sigma,\Sigma_1^*\otimes \Sigma_2^*]^W$$
The first two pieces are freely generated over $\R[\Sigma]^W$ by Hess$Q_j$ and Hess$R_j$. The third piece can be rewritten as $\R[\Sigma,\Sigma_1^*\otimes \Sigma_2^*]^W=\R[\Sigma_1,\Sigma_1^*]^{W_1}\otimes\R[\Sigma_2,\Sigma_2^*]^{W_2}$. By Solomon's Theorem \cite{Solomon63}, $\R[\Sigma_k,\Sigma_k^*]^{W_k}$ are freely generated by $d\rho_j$ and $d\psi_j$, so that $\R[\Sigma,\Sigma_1^*\otimes \Sigma_2^*]^W$ is freely generated by $d\rho_j\otimes d\psi_j$. To finish the proof of the Claim one uses the product rule
$$ \mathrm{Hess}(\rho_i\psi_j)= d\rho_i\otimes d\psi_j + \rho_i\mathrm{Hess}(\psi_j)+\psi_j\mathrm{Hess}(\rho_i) $$
\end{proof}

Irreducible finite reflection groups are classified by type --- see \cite{Humphreys}. For $W$ irreducible of type $A$, $B$ and dihedral, the statement of Theorem \ref{hessian} follows from \cite{Weyl}, \cite{Hunziker97}, while for type $D$, it follows from \cite{Hunziker97}, Theorem 3.1.

Finally we prove the Hessian Theorem for the six exceptional finite reflection groups $W\subset O(\Sigma)$ usually called by the names of their Dynkin diagrams: $H_3$, $H_4$, $F_4$, $E_6$, $E_7$ and $E_8$. Note that the subscript denotes the rank $n=$dim$(\Sigma)$. 
In all cases our proof relies on calculations performed by a computer running GAP 3 (see \cite{GAP3}) using the package CHEVIE, which ultimately rely only on integer arithmetic. For the actual code that was used, see 

http://www.nd.edu/\~{}rmendes/sym2.txt

Recall a way of describing $W\subset O(\Sigma)$ from its Cartan matrix $C=(C_{ij})$. $\Sigma$ has a basis $r_1,\ldots r_n$ of simple roots with corresponding co-roots $r^{\vee}_1,\ldots r^{\vee}_n$. This means that $W$ is generated by the reflections in the hyperplanes $\ker(r^{\vee}_i)$ given by:
$$ R_i :v \mapsto v-r^{\vee}_i (v) r_i \qquad i=1,\ldots n$$ 
Expressing $v\in \Sigma$ in the basis of simple roots $v=a_1r_1+\ldots a_nr_n$, we get
$$R_i(v)=v-\left(\sum_j a_jr_i^{\vee}(r_j)\right)r_i$$
The coefficients $r_i^{\vee}(r_j)=C_{ij}$ form the Cartan matrix.

Here are the Cartan matrices for $H_3$, $H_4$ and $F_4$: (where $\zeta = \exp(2\pi i/5)$)

$$H_3:\  \left( \begin{array}{ccc} 
2 & \zeta^2+\zeta^3 & 0 \\
\zeta^2+\zeta^3 & 2 & -1 \\
0 & -1 & 2 
\end{array}\right)  ,\quad
H_4:\ \left( \begin{array}{cccc} 
2 & \zeta^2+\zeta^3 & 0 &0 \\
\zeta^2+\zeta^3 & 2 & -1 & 0 \\
0 & -1 & 2 & -1 \\
0 & 0 & -1 & 2
\end{array}\right) $$ $$
F_4:\ \left( \begin{array}{cccc} 
\phantom{-}2 & -1 & \phantom{-}0 & \phantom{-}0 \\
-1 & \phantom{-}2 & -1 & \phantom{-}0 \\
\phantom{-}0 & -2 & \phantom{-}2 & -1 \\
\phantom{-}0 & \phantom{-}0 & -1 & \phantom{-}2
\end{array}\right)$$
For the Cartan matrices in type E, refer to the tables at the end of \cite{Bourbaki}.

We start the proof of the Hessian Theorem by describing how the program computes the polynomial
$$\frac{P_t(\R[\Sigma,\mathrm{Sym}^2\Sigma^*]^W)}{P_t(\R[\Sigma]^W)}$$
where $P_t(U)=\sum _{l=0}^\infty (\dim U_l) t^l$ denotes the Poincar\'e series of a graded vector space $U=\oplus _{l=0}^\infty U_l$.

We need to recall a few facts. Let $I$ be the ideal in $\R[\Sigma]$ generated by the homogeneous invariants of positive degree. The quotient $\R[\Sigma]/I$ is known to be isomorphic, as a $W$-representation, to the regular representation (see Theorem B in \cite{Chevalley55}), but it is also a graded vector space. Fixing an irreducible representation/character $\xi$, the Poincar\'e polynomial FD$_\xi(t) $ of the subspace of $\R[\Sigma]/I$ with components isomorphic to $\xi$ is called the \emph{fake degree} of $\xi$ . Moreover $\R[\Sigma]$ is isomorphic to $(\R[\Sigma]/I)\otimes\R[\Sigma]^W$ . Thus the Poincar\'e series of the vector subspace in $\R[\Sigma]$ given by the direct sum of all irreducible subspaces isomorphic to $\xi$ equals FD$_\xi(t) P_t(\R[\Sigma]^W)$.

The way the program computes $P_t(\R[\Sigma,\mathrm{Sym}^2\Sigma^*]^W)$ is as follows:

It first computes the character $\chi$ of Sym$^2\Sigma^*$, and decomposes it into a sum of irreducible characters, using character tables that come with CHEVIE.
$$ \chi=\sum_{\xi\text{ irreducible}} c_\xi \xi$$

It then uses a command in CHEVIE that returns the fake degrees of the irreducible characters $\xi$, and computes
$$ \sum_\xi c_\xi \mathrm{FD}_\xi(t)$$
Using Schur's Lemma one sees that this equals
$$\frac{P_t(\R[\Sigma,\mathrm{Sym}^2\Sigma^*]^W)}{P_t(\R[\Sigma]^W)}$$

Here are the outputs:
$$ \begin{array}{l|l}
 & P_t(\R[\Sigma,\mathrm{Sym}^2\Sigma^*]^W) / P_t(\R[\Sigma]^W)=\\
 \hline\\
H_3 & t^{10}+t^8+t^6+t^4+t^2+1\\
H_4 & t^{38}+t^{30}+t^{28}+t^{22}+t^{20}+t^{18}+t^{12}+t^{10}+t^2+1\\
F_4 & t^{14}+t^{12}+2t^{10}+t^8+2t^6+t^4+t^2+1\\
E_6 &  t^{16} + t^{15} + t^{14} + t^{13} + 2t^{12} + t^{11} +2t^{10} + 2t^9 + \\
&+2t^8 + t^7 + 2t^6 + t^5 + t^4 + t^3 + t^2 + 1\\
E_7 &  t^{26} + t^{24} + 2t^{22} + 2t^{20} + 3t^{18} + 3t^{16} +
3t^{14}  + 3t^{12} +\\
&+ 3t^{10} + 2t^8 + 2t^6 + t^4 + t^2 + 1\\
E_8 & t^{46} + t^{42} + t^{40} + t^{38} + 2t^{36} + 2t^{34} + t^{32} + 3t^{30} + 2t^{28} + 2t^{26} + 3t^{24} + \\
& +2t^{22} +  2t^{20} + 3t^{18} + t^{16} + 2t^{14} + 2t^{12} + t^{10} + t^8 + t^6 + t^2 + 1 
\end{array}$$

Now we turn to the task of defining an explicit set of basic invariants $\rho_1,\ldots \rho_n\in \R[\Sigma]^W$. The degrees $d_i=\mathrm{deg}(\rho_i)$ are well known: (see tables at the end of \cite{Bourbaki})
$$ \begin{array}{l|l}
&\text{degrees } d_1,\ldots d_n\\
\hline
H_3 & 2,6,10\\
H_4 & 2,12, 20, 30\\
F_4 & 2,6,8,12 \\
E_6 & 2,5,6,8,9,12\\
E_7 & 2,6,8,10,12,14,18\\
E_8 & 2,8,12,14,18,20,24,30
\end{array}$$

We choose for each group a regular vector $v\in \Sigma$ and identify it with the row vector of its coefficients in the basis of the simple roots $r_i$. We also take one non-zero $\lambda\in \Sigma^*$ with minimal $W$-orbit size, namely the one which in the basis $\{r^{\vee}_i\}$ of simple co-roots is identified with the row vector
$$ \lambda=(0,\ldots 0,1)\cdot C^{-1} $$

Then the program computes the $W$-orbit $\mathcal{O}$ of $\lambda$. Here are our choices of $v$ and the number of elements in the orbit $\mathcal{O}$:
$$ \begin{array}{l|l|l}
 & v \text{ (in the basis } \{r_i\}) &  |\mathcal{O}|\\
\hline
H_3 & (1,2,3) & 12\\
H_4 & (1,2,3,5) & 20\\
F_4 & (2,-3,5,7)  & 24\\
E_6 & (2,-5,41,7,-9,110) & 27\\
E_7 & (2,-5,41,7,-9,110 ,-87) & 56\\
E_8 & (2,-5,41,7,-9,110 ,-87,11) & 240
\end{array}$$

Since $W$ permutes the linear polynomials in $\mathcal{O}$, for each natural number $m$ we get a $W$-invariant polynomial of degree $m$
$$\psi_m=\sum_{\lambda \in \mathcal{O}} \lambda^m$$

The invariants constructed this way are called the Chern classes associated to the orbit $\mathcal{O}$. See \cite{NeuselSmith} chapter 4.

\begin{lemma}
The polynomials $\rho_i=\psi_{d_i}$, $i=1,\ldots n$, form a set of basic invariants, and $v$ is indeed a regular vector.
\end{lemma}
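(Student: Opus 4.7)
The plan is to reduce both conclusions --- that $\{\rho_i\}$ is a set of basic invariants, and that $v$ is regular --- to a single computer-checked fact: the Jacobian matrix $J(v) = \bigl(\partial \rho_i/\partial x_j\bigr)(v)$ is nonsingular.

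For the first conclusion I would invoke the Jacobian criterion for basic invariants. Since the degrees $d_i$ are exactly those of $W$, the Hilbert series of $\R[\rho_1, \ldots, \rho_n]$ equals $\prod_i (1-t^{d_i})^{-1}$, which by Chevalley's Theorem coincides with $P_t(\R[\Sigma]^W)$. Hence $\{\rho_i\}$ is a set of basic invariants precisely when the $\rho_i$ are algebraically independent, and the latter is equivalent to the Jacobian polynomial $J(x) = \det(\partial \rho_i/\partial x_j)$ being not identically zero. Any single point at which $J$ does not vanish suffices.

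For the second conclusion I would use the classical identification (see Humphreys, \S 3.13) of the Jacobian of any set of basic invariants $\tau_1, \ldots, \tau_n$, up to a nonzero scalar, with the product $\prod_H \alpha_H$ of defining linear forms for the reflecting hyperplanes of $W$. Writing $\rho_i = F_i(\tau_1, \ldots, \tau_n)$ and applying the chain rule gives $J_\rho = \det(\partial F_i/\partial \tau_k)\cdot J_\tau$, so $J_\rho(v) \neq 0$ forces $J_\tau(v) \neq 0$, which means $v$ lies on no reflecting hyperplane and is therefore regular.

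The remaining step is the explicit computation. Given the Cartan matrix $C$, CHEVIE constructs the simple reflections $R_i$ acting on $\Sigma$, generates $W$, computes the orbit $\mathcal{O}$ of $\lambda = (0, \ldots, 0, 1)\cdot C^{-1}$, forms the Chern-class polynomials $\rho_i = \sum_{\mu \in \mathcal{O}} \mu^{d_i}$, symbolically differentiates them, and finally evaluates $\det J(v)$ at the tabulated $v$, checking that the result is nonzero. The main obstacle is purely computational: for $E_8$ the orbit has $240$ elements and the top degree is $30$, so the intermediate polynomials have very many terms with large integer coefficients; and for $H_3, H_4$ one must work over the cyclotomic ring $\mathbb{Z}[\zeta_5]$ rather than $\mathbb{Z}$. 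However all arithmetic is exact, finite, and handled natively by CHEVIE, so there are no subtle numerical issues --- only a verification that a particular integer (or cyclotomic integer) produced by the program is nonzero.
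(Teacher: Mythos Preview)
Your proposal is correct and follows essentially the same route as the paper: compute the Jacobian determinant of the $\rho_i$ at the chosen $v$ via CHEVIE, and use its nonvanishing both to conclude algebraic independence (hence basic invariants, since the degrees are the right ones) and to deduce regularity of $v$ via the identification of the Jacobian of basic invariants with the product of reflecting-hyperplane forms (Humphreys \S3.13). Your chain-rule argument relating $J_\rho$ to $J_\tau$ is a slightly more explicit way of phrasing the regularity step than the paper's bare citation, but the content is the same.
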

\begin{proof}
Let $J$ be the Jacobian matrix
$$J= \left( \frac{\partial\rho_i}{\partial r^{\vee}_j} \right)_{i,j}=\left( \sum_{\lambda \in \mathcal{O}} d_i\lambda^{d_i-1}\frac{\partial\lambda}{\partial r^{\vee}_j}\right)_{i,j} $$  The program computes its determinant, evaluates it at the vector $v$, and checks that the value is non-zero. This proves both that $\rho_i$ are algebraically independent (see Proposition 3.10 in \cite{Humphreys}) and hence a set of basic invariants because they have the right degrees; and that $v$ is indeed a regular vector, that is, does not belong to any of the reflecting hyperplanes (see section 3.13 in \cite{Humphreys}). \end{proof}

We point out that L. Flatto and M. Weiner studied the set of all $\lambda\in \Sigma^*$ that make the $\rho_i=\psi_{d_i}$ constructed above a set of basic invariants. They produce a distinguished set of basic invariants $J_1, \ldots J_n$, determined up to non-zero constants, such that $\lambda\in \Sigma^*$ gives rise to a set of basic invariants if and only if $J_i(\lambda)\neq 0$ for all $i$ --- see \cite{FlattoWeiner69,Flatto70} for more details.

\begin{theorem}
\label{exceptional}
Let $W\subset O(\Sigma)$ be one of the six exceptional finite reflection groups, and $\rho_1, \ldots \rho_n$ the set of basic invariants described above. Let $ T\subset \{\rho_i\} \cup \{\rho_i\rho_j\}$ be a subset with $n(n+1)/2$ elements such that $T$ contains $\{\rho_i\}$ and
$$\sum_{Q\in T} t^{\deg(Q)-2} =  \frac{P_t(\R[\Sigma,\mathrm{Sym}^2\Sigma^*]^W)}{P_t(\R[\Sigma]^W)}$$

There is at least one such $T$, and for each one,
$ \{ \mathrm{Hess}(Q)\ |\ Q\in T\}$ is a basis for $\R[\Sigma,\mathrm{Sym}^2\Sigma^*]^W$ as a free module over $\R[\Sigma]^W$.
\end{theorem}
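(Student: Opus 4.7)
My plan is to first observe the classical freeness result behind the theorem: for a finite reflection group $W\subset O(\Sigma)$ and any finite-dimensional $W$-module $M$, the module $(\R[\Sigma]\otimes M)^W$ is graded free over $\R[\Sigma]^W$ of rank $\dim_\R M$. (Briefly: writing $\R[\Sigma]\cong \R[\Sigma]^W\otimes \mathcal{H}$ for $\mathcal{H}$ the harmonics, and using $\mathcal{H}\cong \R W$ as a $W$-module, gives $(\R[\Sigma]\otimes M)^W\cong \R[\Sigma]^W\otimes(\R W\otimes M)^W\cong \R[\Sigma]^W\otimes M$.) Applied to $M=\mathrm{Sym}^2\Sigma^*$, this makes $F:=\R[\Sigma,\mathrm{Sym}^2\Sigma^*]^W$ graded free of rank $r:=n(n+1)/2$, and the degrees of any homogeneous $\R[\Sigma]^W$-basis are exactly what is read off from the polynomial $P_t(F)/P_t(\R[\Sigma]^W)$. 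Since $\deg\mathrm{Hess}(Q)=\deg(Q)-2$, the condition placed on $T$ in the statement is precisely that the degrees of $\{\mathrm{Hess}(Q)\}_{Q\in T}$ match those of some homogeneous basis; existence of such a $T$ is then a finite combinatorial bookkeeping task (subtract the forced contribution $\sum_i t^{d_i-2}$ from the tabulated polynomial and try to complete with $n(n-1)/2$ monomials $t^{d_i+d_j-2}$, $i\le j$), which the computer carries out per group.

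For any valid $T$, I would consider the graded $\R[\Sigma]^W$-linear map
$$\phi_T\colon \bigoplus_{Q\in T}\R[\Sigma]^W(2-\deg Q)\longrightarrow F, \qquad Q\longmapsto \mathrm{Hess}(Q).$$
Source and target are graded free $\R[\Sigma]^W$-modules of the same Hilbert series, so $\phi_T$ is an isomorphism iff it is injective; and $\phi_T$ is injective iff $\{\mathrm{Hess}(Q)\}_{Q\in T}$ is linearly independent over $\R[\Sigma]^W$. I would then reduce the latter to linear independence \emph{at a single point}: if the $r$ symmetric $2$-forms $\mathrm{Hess}(Q)(v)\in\mathrm{Sym}^2\Sigma^*$ are linearly independent over $\R$ for some $v\in\Sigma$, then any hypothetical relation $\sum_Q c_Q\mathrm{Hess}(Q)=0$ with $c_Q\in\R[\Sigma]^W$ forces every $c_Q$ to vanish on the Zariski-open locus where pointwise independence holds, hence to vanish identically.

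The proof thus boils down to computing, for each of the six exceptional groups and each valid $T$, the $r\times r$ matrix whose rows express $\mathrm{Hess}(Q)(v)$ in a fixed basis of $\mathrm{Sym}^2\Sigma^*$, evaluated at the regular vector $v$ already fixed earlier in the appendix, and checking that its determinant is non-zero. The Hessians are explicit in terms of the orbit polynomials $\psi_m$, so this is a finite exact calculation that the GAP 3 script at the linked URL performs. The main obstacle is not conceptual but computational: for $E_8$ one has $r=36$, an orbit $\mathcal{O}$ of $240$ elements, and invariants of degrees up to $30$, so the determinant lives in $\mathbb{Z}$ (or $\mathbb{Z}[\zeta]$ for the non-crystallographic $H_3, H_4$) and its entries are large but tractable symbolic expressions; organizing this so that only integer (or cyclotomic-integer) arithmetic is used is what makes computer verification rigorous.
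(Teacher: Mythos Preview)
Your proposal is correct and follows essentially the same route as the paper: verify pointwise linear independence of $\{\mathrm{Hess}(Q)(v)\}_{Q\in T}$ at the fixed regular vector $v$ by computing an $r\times r$ determinant, deduce $\R[\Sigma]^W$-linear independence, and conclude by matching Poincar\'e series. The only cosmetic difference is that you spell out the freeness of $\R[\Sigma,\mathrm{Sym}^2\Sigma^*]^W$ via the harmonics decomposition, whereas the paper leaves this implicit in its earlier fake-degree discussion and simply compares Poincar\'e series at the end.
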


\begin{proof}

First the program finds a list of all subsets $T$ satisfying the condition in the statement of the Theorem. The number of elements in this list (choices for $T$) are:

$$\begin{array}{l|l|l|l|l|l|l}
& H_3 & H_4 &  F_4  & E_6 & E_7 & E_8 \\
\hline
\text{choices }& 2 & 2 & 2 & 12 & 48 & 96\\
\end{array}$$

For each $T$, the program constructs a square matrix $M$ of size $n(n+1)/2$. The rows are in correspondence with the set $\mathcal{H}= \{ \mathrm{Hess}(Q)\ |\ Q\in T\}$ , and the columns with the set $\mathcal{P}$ of upper triangular positions of an $n\times n$ matrix. The entry of $M$ associated with $\mathrm{Hess}(Q)\in \mathcal{H}$ and a position $(a,b)\in\mathcal{P}$ is the $(a,b)$-entry of Hess$(Q)(v)$, that is, $$\frac{\partial^2 Q} { \partial r^{\vee}_a \partial r^{\vee}_b} (v)$$

Then it proceeds to compute the determinant of $M$ and checks that it is non-zero. This implies that $\mathcal{H}$ is linearly independent at $v$, hence over $\R[\Sigma]$, and in particular over $\R[\Sigma]^W$.

Therefore span$_{\R[\Sigma]^W}\mathcal{H}$ is a submodule of $\R[\Sigma,\mathrm{Sym}^2\Sigma^*]^W$ with the same Poincar\'e series, and so they must coincide.
\end{proof}

\bibliographystyle{plain}
\bibliography{ref}

\def\cprime{$'$}
\begin{thebibliography}{10}

\bibitem{Bourbaki}
N.~Bourbaki.
\newblock {\em {\'E}l{\'e}ments de math{\'e}matique. {F}asc. {XXXIV}. {G}roupes
  et alg{\`e}bres de {L}ie. {C}hapitre {IV}: {G}roupes de {C}oxeter et
  syst{\`e}mes de {T}its. {C}hapitre {V}: {G}roupes engendr{\'e}s par des
  r{\'e}flexions. {C}hapitre {VI}: syst{\`e}mes de racines}.
\newblock Actualit{\'e}s Scientifiques et Industrielles, No. 1337. Hermann,
  Paris, 1968.

\bibitem{Bredon}
Glen~E. Bredon.
\newblock {\em Introduction to compact transformation groups}.
\newblock Academic Press, New York, 1972.
\newblock Pure and Applied Mathematics, Vol. 46.

\bibitem{Chevalley55}
Claude Chevalley.
\newblock Invariants of finite groups generated by reflections.
\newblock {\em Amer. J. Math.}, 77:778--782, 1955.

\bibitem{Dadok85}
Jiri Dadok.
\newblock Polar coordinates induced by actions of compact {L}ie groups.
\newblock {\em Trans. Amer. Math. Soc.}, 288(1):125--137, 1985.

\bibitem{EschenburgHeintze99}
J.-H. Eschenburg and E.~Heintze.
\newblock On the classification of polar representations.
\newblock {\em Math. Z.}, 232(3):391--398, 1999.

\bibitem{Field77}
M.~J. Field.
\newblock Transversality in {$G$}-manifolds.
\newblock {\em Trans. Amer. Math. Soc.}, 231(2):429--450, 1977.

\bibitem{Flatto70}
Leopold Flatto.
\newblock Invariants of finite reflection groups and mean value problems. {II}.
\newblock {\em Amer. J. Math.}, 92:552--561, 1970.

\bibitem{FlattoWeiner69}
Leopold Flatto and Sister Margaret~M. Weiner.
\newblock Invariants of finite reflection groups and mean value problems.
\newblock {\em Amer. J. Math.}, 91:591--598, 1969.

\bibitem{FultonHarris}
William Fulton and Joe Harris.
\newblock {\em Representation theory}, volume 129 of {\em Graduate Texts in
  Mathematics}.
\newblock Springer-Verlag, New York, 1991.
\newblock A first course, Readings in Mathematics.

\bibitem{GroveZiller12}
Karsten Grove and Wolfgang Ziller.
\newblock Polar manifolds and actions.
\newblock {\em J. Fixed Point Theory Appl.}, 11(2):279--313, 2012.

\bibitem{Humphreys}
James~E. Humphreys.
\newblock {\em Reflection groups and {C}oxeter groups}, volume~29 of {\em
  Cambridge Studies in Advanced Mathematics}.
\newblock Cambridge University Press, Cambridge, 1990.

\bibitem{Hunziker97}
M.~Hunziker.
\newblock Classical invariant theory for finite reflection groups.
\newblock {\em Transform. Groups}, 2(2):147--163, 1997.

\bibitem{Joseph97}
Anthony Joseph.
\newblock On a {H}arish-{C}handra homomorphism.
\newblock {\em C. R. Acad. Sci. Paris S\'er. I Math.}, 324(7):759--764, 1997.

\bibitem{Kumar89}
Shrawan Kumar.
\newblock A refinement of the {PRV} conjecture.
\newblock {\em Invent. Math.}, 97(2):305--311, 1989.

\bibitem{Malgrange}
B.~Malgrange.
\newblock {\em Ideals of differentiable functions}.
\newblock Tata Institute of Fundamental Research Studies in Mathematics, No. 3.
  Tata Institute of Fundamental Research, Bombay, 1967.

\bibitem{Mathieu89}
Olivier Mathieu.
\newblock Construction d'un groupe de {K}ac-{M}oody et applications.
\newblock {\em Compositio Math.}, 69(1):37--60, 1989.

\bibitem{Mendes11}
Ricardo A.~E. Mendes.
\newblock {\em Equivariant Tensors on Polar Manifolds}.
\newblock PhD thesis, University of Pennsylvania, 2011.

\bibitem{Michor96}
Peter~W. Michor.
\newblock Basic differential forms for actions of {L}ie groups.
\newblock {\em Proc. Amer. Math. Soc.}, 124(5):1633--1642, 1996.

\bibitem{Michor97}
Peter~W. Michor.
\newblock Basic differential forms for actions of {L}ie groups. {II}.
\newblock {\em Proc. Amer. Math. Soc.}, 125(7):2175--2177, 1997.

\bibitem{NeuselSmith}
Mara~D. Neusel and Larry Smith.
\newblock {\em Invariant theory of finite groups}, volume~94 of {\em
  Mathematical Surveys and Monographs}.
\newblock American Mathematical Society, Providence, RI, 2002.

\bibitem{PalaisTerng87}
Richard~S. Palais and Chuu-Lian Terng.
\newblock A general theory of canonical forms.
\newblock {\em Trans. Amer. Math. Soc.}, 300(2):771--789, 1987.

\bibitem{GAP3}
Martin {Sch{\accent127 o}nert} et~al.
\newblock {\em {GAP} -- {Groups}, {Algorithms}, and {Programming} -- version 3
  release 4 patchlevel 4}.
\newblock Lehrstuhl D f{\accent127 u}r Mathematik, Rheinisch Westf{\accent127
  a}lische Technische Hoch\-schule, Aachen, Germany, 1997.

\bibitem{Schwarz07}
Gerald~W. Schwarz.
\newblock When polarizations generate.
\newblock {\em Transform. Groups}, 12(4):761--767, 2007.

\bibitem{Solomon63}
Louis Solomon.
\newblock Invariants of finite reflection groups.
\newblock {\em Nagoya Math. J.}, 22:57--64, 1963.

\bibitem{Springer}
T.~A. Springer.
\newblock {\em Invariant theory}.
\newblock Lecture Notes in Mathematics, Vol. 585. Springer-Verlag, Berlin,
  1977.

\bibitem{Tevelev00}
E.~A. Tevelev.
\newblock On the {C}hevalley restriction theorem.
\newblock {\em J. Lie Theory}, 10(2):323--330, 2000.

\bibitem{Wallach93}
Nolan~R. Wallach.
\newblock Invariant differential operators on a reductive {L}ie algebra and
  {W}eyl group representations.
\newblock {\em J. Amer. Math. Soc.}, 6(4):779--816, 1993.

\bibitem{Warner}
Garth Warner.
\newblock {\em Harmonic analysis on semi-simple {L}ie groups. {I}}.
\newblock Springer-Verlag, New York, 1972.
\newblock Die Grundlehren der mathematischen Wissenschaften, Band 188.

\bibitem{Weyl}
Hermann Weyl.
\newblock {\em The {C}lassical {G}roups. {T}heir {I}nvariants and
  {R}epresentations}.
\newblock Princeton University Press, Princeton, N.J., 1939.

\end{thebibliography}
\end{document}